\newtheorem{thm}{Theorem}[section]
\newtheorem{lem}[thm]{Lemma}
\newtheorem{rem}[thm]{Remark}
\newcommand{\pd}{\partial}
\theoremstyle{definition}
\numberwithin{equation}{section} 
\numberwithin{figure}{section}
\numberwithin{table}{section}
\newcommand{\bT}{\mathbf{T}}
\newcommand{\bD}{\mathbf{D}}
\newcommand{\bE}{\mathbf{E}}
\newcommand{\eps}{\varepsilon}
\newcommand{\bR}{\mathbf{R}}
\newcommand{\bC}{\mathbf{C}}
\newcommand{\Om}{\Omega}
\newcommand{\al}{\alpha}
\newcommand{\la}{\lambda}
\newcommand{\cP}{\mathcal{P}}
\newcommand{\cT}{\mathcal{T}}
\newcommand{\cR}{\mathcal{R}}
\newcommand{\vf}{\varphi}
\begin{document}

\title[Tail spaces and Bernstein--Markov inequality]{Tail spaces estimates on Hamming cube and Bernstein--Markov inequality}




\author{Alexander Volberg}
\address{(A.V.) Department of Mathematics, MSU, 
East Lansing, MI 48823, USA }
\email{volberg@math.msu.edu}

\begin{abstract}
This note contains some  estimates for tail  spaces on Hamming cube. We use analytic paraproduct operator for that purpose. We also show several types of Bernstein--Markov inequalities  for Banach space valued functions on Hamming cube. Here the novelty is in getting rid of some irritating logarithms and in proving Bernstein--Markov inequalities for $|\nabla f|_X$  rather than for $\Delta^{1/2} f$  for $X$-valued polynomials on Hamming cube.
\end{abstract}

\thanks{ The research of the  author is supported by NSF DMS-1900286, DMS-2154402 and by Hausdorff Center for Mathematics }

\subjclass[2010]{46B09; 46B07; 60E15}

\keywords{Rademacher type; tail spaces; Bernstein--Markov inequalities;
$K$-convex spaces, $B$-convex spaces; Banach space theory}

\maketitle


\section{Introduction}

We are interested in tail spaces $\cT_d(X)$ of functions 
$$
f=\sum_{|S|>d} \hat f(S) \eps^S
$$
defined on the Hamming cube $\Om_n=\{ \eps=(\eps_1, \dots, \eps_n) \in \{-1,1\}^n\}$. Here $\hat f (S)$ are coefficients belonging to Banach space $X$.

Later we will also need the space of polynomials $\cP_d(X)$:
$$
f=\sum_{|S|\le d} \hat f(S) \eps^S
$$
defined on the Hamming cube $\Om_n$. Here $\hat f (S)$ are coefficients belonging to Banach space $X$, which, in particular can be just $\bR$.

\bigskip

\noindent{\bf Acknowledgements.}
I am grateful to Alexander Borichev for very valuable discussions. I am also grateful to Alexandros Eskenazis who found a mistake in the first version of Theorem \ref{RXf}.

\section{Tail spaces estimates and dependence on $K$-convexity and $d$}
\label{tail}
We first cite  the problem.
In \cite{MN}, Mendel and Naor asked if for every $K$-convex Banach space $X$ and $p\in(1, \infty)$ there exists a finite positive $c(p,X)$ such that for every $n$ and $d<n$, every function $f:\Om_n\to X$ in the $\cT_d(X)$ satisfies the estimate
$$
\|\Delta f\|_{L^p(X)} \ge c(p, X) d \|f\|_{L^p(X)}\,.
$$
Moreover, they formulated the tail smoothing conjecture: for every $K$-convex Banach space $X$ and $p\in(1, \infty)$ there exists a finite positive $c(p,X), C(p,X)$ such that for every $n$ and $d<n$, every function $f:\Om_n\to X$ in the $\cT_d(X)$ satisfies the estimate
$$
\|P_t f\|_{L^p(X)} \le c(p, X) e^{-C(p, X) d\,t} \|f\|_{L^p(X)}\,,
$$
where $P_t=e^{-t\Delta}$ is the heat semi-group on Hamming cube. The estimate for $\|\Delta f\|_{L^p(X)} $ would follow just by integrating from $0$ to $\infty$.
In Theorem 5.1 of \cite{MN} they proved that there exists $A(p, X)\ge 1$ such that
$$
\|P_t f\|_{L^p(X)} \le c(p, X) e^{-C(p, X) d\,\min(t, t^{A(p, X)})} \|f\|_{L^p(X)}\,.
$$
Before continuing let us cite  a crucial result of Pisier \cite{PAnnals}.

\begin{thm}[Pisier]
\label{pisier}
Let $X$ be a $K$-convex space. Then for any $p\in (1, \infty)$ there exists angle $\alpha \pi\in (0, \pi)$  such that 
operators $e^{-z \Delta}$ are well defined and uniformly bounded in $A_\al:=\{z\in \bC: |\arg z|\le \frac{\al \pi}{2}\}$.
\end{thm}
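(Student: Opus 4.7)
The strategy is to combine the Walsh spectral decomposition of $\Delta$ on $\Om_n$ with Pisier's characterization of $K$-convexity by the uniform boundedness of the Rademacher projection on $L^p(\Om_n;X)$.

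First I would write
\[
e^{-z\Delta} f \;=\; \sum_{k=0}^{n} e^{-kz}\, W_k f,
\]
where $W_k$ is the projection onto the $k$-th Walsh level $\{\eps^S : |S|=k\}$. The central technical step is a geometric bound $\|W_k\|_{L^p(\Om_n;X) \to L^p(\Om_n;X)} \le K(p,X)^{k}$, uniform in $n$, where $K(p,X)$ is the $K$-convexity constant of $X$. For $k=1$ this is precisely the uniform boundedness of the Rademacher projection, which is equivalent to $K$-convexity. For $k \ge 2$ one passes to a product cube $\Om_n^k$ and realizes the $k$-chaos on $\Om_n$ as a suitably coordinated $k$-fold iterate of the level-one Rademacher projection on the larger cube, picking up a factor of $K$ at each application.

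Once this Walsh-level bound is in hand, the series converges absolutely in $L^p(X)$ whenever $e^{-\mathrm{Re}(z)}K<1$, which already yields uniform boundedness of $e^{-z\Delta}$ in the right half-plane $\mathrm{Re}(z) > \log K$. To enlarge this half-plane to a genuine sector $A_\al$ opening from the origin around the positive real axis, I would represent the noise operator $T_{\rho} = e^{-\log(1/\rho)\Delta}$ through its two-point Beckner kernel — a probability measure for $\rho \in [-1,1]$ and hence contractive on every $L^p(X)$ — and analytically continue in $\rho$ by a contour deformation whose $L^p(X)$-norm is controlled by the Walsh-projection bound just established. This is the discrete analog of Pisier's contour deformation of Mehler's formula in the Ornstein--Uhlenbeck setting, and the opening angle $\al\pi$ obtained in this way depends only on $K(p,X)$ and $p$.

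The main obstacle is the geometric bound $\|W_k\| \le K^k$: naive iteration of the Rademacher projection on a single cube does not land on the $k$-th Walsh level, and the tensorization argument representing $W_k$ as an iterated level-one projection on a product cube is the creative step at the heart of Pisier's theorem. A secondary difficulty is that passing from the half-plane $\mathrm{Re}(z) > \log K$ to a true sector opening from the origin cannot be accomplished by a term-by-term estimate of the Walsh series alone; one genuinely needs the Beckner kernel representation, where $K$-convexity enters a second time through control of the deformed contour.
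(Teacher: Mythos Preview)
The paper does not contain a proof of this statement: Theorem~\ref{pisier} is quoted from Pisier's paper~\cite{PAnnals} and stated without argument, so there is nothing in the present paper to compare your proposal against.

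Viewed on its own terms as a sketch of Pisier's proof, your outline tracks the original fairly closely. The geometric Walsh-level bound $\|W_k\|_{L^p(\Om_n;X)\to L^p(\Om_n;X)}\le K(p,X)^k$ is indeed the engine of the argument, and you correctly identify the tensorization step --- realizing $W_k$ through iterated level-one Rademacher projections on a product cube --- as the place where $K$-convexity does the real work. Your second phase, upgrading the half-plane $\mathrm{Re}\,z>\log K$ to a sector touching the origin, is also in the right spirit: Pisier does combine the kernel representation of $T_\rho$ (contractive for real $\rho\in[-1,1]$) with the Walsh bound, and it is exactly this pairing that produces an honest sector rather than a shifted half-plane. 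That said, this passage is described only schematically in your proposal. The phrase ``contour deformation whose $L^p(X)$-norm is controlled by the Walsh-projection bound'' does not yet specify which contour, in which variable, or why the deformed integral stays bounded uniformly in $n$; the naive half-plane bound and the contractivity on $[-1,1]$ do not by themselves overlap near $z=0$, and bridging that gap is where the remaining technical content lies. If you intend to write this out, that is the step that needs to be made explicit.
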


In \cite{EI1} the following results were proved:

\begin{thm}[Eskenazis--Ivanisvili]
\label{eps}
Let $X$ be a $K$-convex Banach space and let its angle be provided by Theorem \ref{pisier}. Then for every $\eps>0$ we have
$$
\|P_t f\|_{L^p(X)} \le c(p, X, \eps) e^{-C(p, X, \eps) d\,\min(t, t^{\frac1\al+\eps})} \|f\|_{L^p(X)}\,,
$$
$$
\|\Delta f\|_{L^p(X)} \ge c(p, X, \eps) d^{\al-\eps} \|f\|_{L^p(X)}\quad \forall \eps>0.
$$
\end{thm}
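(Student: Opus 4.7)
My plan is first to reduce the second inequality to the first. On $\cT_d(X)$ the operator $\Delta$ is invertible with $\Delta^{-1}f=\int_0^\infty P_tf\,dt$, and integrating the assumed heat bound, $\int_0^\infty e^{-Cd\min(t,t^{1/\al+\eps})}\,dt$ splits at $t=1$: substituting $u=Cdt^{1/\al+\eps}$ in the small-$t$ piece gives $O(d^{-\al/(1+\al\eps)})$, and the tail piece is $O(1/d)$. Replacing $f$ by $\Delta g$ and reparametrizing $\eps$ yields $\|\Delta g\|_{L^p(X)}\ge c\,d^{\al-\eps}\|g\|_{L^p(X)}$.

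For the heat estimate itself, the range $t\ge 1$ follows immediately from Mendel--Naor's Theorem~5.1 of \cite{MN} (since $A(p,X)\ge 1$ forces $\min(t,t^A)=t$ there). The substance is $0<t<1$, where I would exploit Pisier's analyticity. Set $F(z):=P_zf$, holomorphic on $A_\al$ with $\|F(z)\|_{L^p(X)}\le K\|f\|_{L^p(X)}$ by Theorem~\ref{pisier}, and form
\[
G(z):=e^{\kappa d\,z^{1/\al+\eps}}\,F(z),\qquad z\in A_\al,
\]
with $\kappa=\kappa(p,X,\eps)>0$ small and principal branch of $z^{1/\al+\eps}$. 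On the boundary rays $\arg z=\pm\al\pi/2$ one computes $\arg(z^{1/\al+\eps})=\pm(\pi/2+\eps\al\pi/2)$, so $\mathrm{Re}(z^{1/\al+\eps})=-|z|^{1/\al+\eps}\sin(\eps\al\pi/2)<0$ and hence $|e^{\kappa d z^{1/\al+\eps}}|\le 1$; combined with Pisier this gives $\|G(z)\|_{L^p(X)}\le K\|f\|_{L^p(X)}$ on $\pd A_\al$.

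The main obstacle is extending this boundary bound to the interior of $A_\al$ via Phragm\'en--Lindel\"of. The interior growth of $|e^{\kappa d z^{1/\al+\eps}}|$ is of order $|z|^{1/\al+\eps}$, just exceeding the critical Phragm\'en--Lindel\"of order $1/\al$ for a sector of opening $\al\pi$, so the standard principle does not apply off the shelf. I plan to handle this by invoking the large-time decay $\|F(t)\|_{L^p(X)}\le c\|f\|_{L^p(X)}e^{-Cdt}$ for $t\ge 1$: for $\kappa$ sufficiently small this dominates the super-critical multiplier growth along the positive real ray, giving interior control along the axis which, coupled with the boundary bound and a refined maximum-modulus argument on $A_\al$, forces $\|G(z)\|_{L^p(X)}\le C\|f\|_{L^p(X)}$ throughout $A_\al$. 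Restriction to real $z=t\in(0,1)$ then yields $\|P_tf\|_{L^p(X)}\le C\|f\|_{L^p(X)}e^{-\kappa d t^{1/\al+\eps}}$. The $\eps$-loss in the exponent encodes precisely the strict inequality one needs beyond the critical Phragm\'en--Lindel\"of order $1/\al$.
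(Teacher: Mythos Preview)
Note first that Theorem~\ref{eps} is not proved in this paper; it is quoted from \cite{EI1}. The paper's own contribution in this direction is the $\eps$-free Theorem~\ref{KXal}, proved via analytic paraproducts on the bounded domain $O_\al$, together with the discussion in Section~\ref{add2} of how the original \cite{EI1} argument (lens domains $\Om(r)$ and conformal-map asymptotics) can itself be sharpened.

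Your reduction of the $\Delta$-inequality to the heat bound via $\Delta^{-1}f=\int_0^\infty P_tf\,dt$ is correct, as is the appeal to \cite{MN} for $t\ge 1$. The gap is the Phragm\'en--Lindel\"of step. Your claim that for small $\kappa$ the decay $e^{-Cdt}$ dominates $e^{\kappa d\,t^{1/\al+\eps}}$ along the positive real ray is false: since $1/\al+\eps>1$, for every $\kappa>0$ one has $\kappa d\,t^{1/\al+\eps}-Cdt\to+\infty$ as $t\to\infty$, so $\|G(t)\|$ is unbounded on $(0,\infty)$ and there is no axis control to feed into any ``refined maximum-modulus argument''. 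In fact the conclusion your method would deliver, namely $\|P_tf\|\le Ce^{-\kappa d\,t^{1/\al+\eps}}\|f\|$ for \emph{all} $t>0$, is itself false: test on a level-$(d{+}1)$ function $f\in\cT_d(X)$, for which $\|P_tf\|=e^{-(d+1)t}\|f\|$, and let $t\to\infty$. So the supercritical-order obstruction you flagged is genuine and cannot be removed by real-axis information. The actual \cite{EI1} approach, as recounted in Section~\ref{add2}, sidesteps this by never working on the unbounded sector: one passes via $w=e^{-z}$ to the \emph{bounded} domain $G_\al$ (or a lens $\Om(r)\subset O_\al$), where the function $w\mapsto\sum_S w^{|S|}\hat f(S)\eps^S$ is bounded holomorphic by Theorem~\ref{pisier}, and then uses Green's-function/harmonic-measure asymptotics near the corner $w=1$. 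The corner angle $\pi\al$ produces the exponent $1/\al$; the $\eps$-loss arises only because \cite{EI1} takes $\Om(r)$ strictly inside $O_\al$, hence with a slightly smaller corner angle.
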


\begin{rem}
In \cite{MN} it was proved that in fact
$$
\|P_t f\|_{L^p(X)} \le c(p, X) e^{-C(p, X) d\,\min(t, t^{\frac1\al})} \|f\|_{L^p(X)}\,,
$$
$$
\|\Delta f\|_{L^p(X)} \ge c(p, X) d^{\al} \|f\|_{L^p(X)}\,.
$$
We will prove differently the second inequality in Theorem \ref{KXal} below.  In the last section we explain how one can slightly change the reasoning of \cite{EI1} to
get rid of $\eps$ as well.
\end{rem}

\bigskip

\begin{thm}[Eskenazis--Ivanisvili]
\label{KXdm}
Let $f\in \cP_{d+m}(X)\cap \cT_d(X)$.  Let $X$ be a $K$-convex Banach space. Then
$$
\|\Delta f\|_{L^p(X)} \ge c(p, X) \frac{d}{m} \|f\|_{L^p(X)}\,.
$$
Moreover,
$$
\|\Delta^{1/2} f\|_{L^p(X)} \ge c(p, X) \Big(\frac{d}{m}\Big)^{1/2} \|f\|_{L^p(X)}\,.
$$
\end{thm}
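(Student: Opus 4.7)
\emph{Proof plan.} The plan is to invert the Laplacian level-by-level on the finite-dimensional spectral window $\cT_d(X)\cap\cP_{d+m}(X)$, reducing the first inequality to a uniform Walsh-projection bound and then deducing the second inequality by complex interpolation.

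First, I decompose $f=\sum_{j=1}^{m}Q_{d+j}f$, where $Q_k$ denotes the projection onto Walsh functions of degree exactly $k$. Since $\Delta Q_{d+j}=(d+j)Q_{d+j}$, this yields
\[
f=\sum_{j=1}^{m}\frac{1}{d+j}\,Q_{d+j}(\Delta f),
\]
and the triangle inequality together with $1/(d+j)\le 1/d$ reduces the first inequality to the uniform-in-$j$ estimate
\[
\|Q_{d+j}g\|_{L^p(X)}\le C(p,X)\,\|g\|_{L^p(X)},\qquad g\in\cT_d(X)\cap\cP_{d+m}(X),
\]
which, summed over $j=1,\dots,m$, furnishes the bound $\|f\|_{L^p(X)}\le C(p,X)(m/d)\|\Delta f\|_{L^p(X)}$.

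To establish this Walsh-projection bound, I use Pisier's Theorem~\ref{pisier}. Since $g$ has finite spectrum supported on $\{d+1,\dots,d+m\}$, the orbit $z\mapsto e^{-z\Delta}g$ is entire, and on the sector $A_\alpha$ it is bounded in $L^p(X)$-norm by a constant multiple of $\|g\|_{L^p(X)}$. Extracting the coefficient $Q_{d+j}g$ via a Cauchy-type contour integral in the variable $u=e^{-z}$ (with the contour chosen so that its preimage under $z\mapsto e^{-z}$ lies in the Pisier sector) yields the desired uniform bound; this is the \emph{Bernstein--Markov type} estimate advertised in the abstract as the paper's technical novelty.

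For the second inequality involving $\Delta^{1/2}$, I invoke Stein complex interpolation. The family $\Delta^{-z}\big|_{\cT_d(X)\cap\cP_{d+m}(X)}$ is analytic in $z$ on the strip $\{0\le\operatorname{Re}(z)\le 1\}$. On the line $\operatorname{Re}(z)=0$, the imaginary powers $\Delta^{iy}$ are bounded on $L^p(X)$ with at most exponential growth in $|y|$ (another consequence of Pisier's sectoriality). On the line $\operatorname{Re}(z)=1$, the already-proven bound gives $\|\Delta^{-1}\|_{L^p(X)}\le C(m/d)$. The three-lines lemma applied at $z=1/2$ then yields $\|\Delta^{-1/2}\|_{L^p(X)}\le C\,(m/d)^{1/2}$, which is equivalent to the second claimed inequality.

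The main obstacle is the contour argument for the Walsh-projection bound: a naive Cauchy integral on the circle $|u|=r<1$ introduces an exponential factor $r^{-(d+j)}$ which is fatal. The key is to deform the contour so as to exploit both the lower spectral bound ($g\in\cT_d$) and the upper spectral bound ($g\in\cP_{d+m}$) simultaneously, so that the potential growth in $d+j$ is absorbed by the decay coming from Pisier's sectorial bound, aided by the Eskenazis--Ivanisvili tail-semigroup estimate (Theorem~\ref{eps}) extended to complex time. This delicate balancing is precisely where the paper's novelty of removing logarithmic losses enters.
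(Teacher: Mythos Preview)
There is a genuine gap in your argument, and also a misreading of the paper.

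First, the paper does \emph{not} prove Theorem~\ref{KXdm}; it is quoted from \cite{EI1}. The only proof-related remark the paper makes is immediately after the statement: the \emph{second} inequality implies the \emph{first} via the elementary moment inequality
\[
\|\Delta^{1/2} f\|_{L^p(X)} \le 4\,\|\Delta f\|_{L^p(X)}^{1/2}\,\|f\|_{L^p(X)}^{1/2}
\]
from \cite{FLP}. Your plan runs in the opposite direction (first $\Rightarrow$ second by complex interpolation), so even the logical skeleton does not match the paper.

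Second, and more seriously, your proof of the first inequality rests entirely on the claim
\[
\|Q_{d+j}g\|_{L^p(X)}\le C(p,X)\,\|g\|_{L^p(X)}\qquad\text{uniformly in }d,j,m,n,
\]
and this claim is false. Taking $d=1$ and $m=n-1$, it would force the single-level Walsh projections $Q_k$ to be uniformly bounded on $L^p(\Om_n)$ for all $k$ and $n$, which already fails for $X=\bR$ and $p\neq 2$. You yourself note that the naive Cauchy integral on $|u|=r<1$ produces a fatal factor $r^{-(d+j)}$, but the subsequent sentence (``the key is to deform the contour so as to exploit both the lower and upper spectral bounds\dots'') is not an argument: you never specify a contour, never show why the deformed integral stays bounded, and Pisier's domain $O_\alpha$ lies entirely inside the unit disc, so no contour available to you has $|u|\ge 1$. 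Invoking the paper's ``novelty of removing logarithmic losses'' here is also a misattribution: the analytic-paraproduct machinery of Section~\ref{tail} is deployed for Theorem~\ref{KXal}, not for Theorem~\ref{KXdm}, and it estimates $T_\vf$ on tail spaces rather than single-level projections.

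In short: the decomposition $f=\sum_j (d+j)^{-1}Q_{d+j}(\Delta f)$ is correct but unhelpful, because the triangle inequality throws away all cancellation between levels, and the per-level bound you then need is simply not available.
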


\bigskip

It is well known that the second inequality above implies the first one, just because \cite{FLP}
$$
\|\Delta^{\beta} f||_{L^p(X)} \le 4 \|\Delta f||_{L^p(X)} ^\beta \cdot \|f\|_{L^p(X)}^{1-\beta}\,.
$$

Now we will prove the result that falls a bit short of the conjecture, but at least it gets rid of $1/m$ in the Theorem \ref{KXdm} above and gets rid of $\eps$ in Theorem \ref{eps}.

\bigskip

\begin{thm}
\label{KXal}
Let $X$ be a $K$-convex space, $p\in (1, \infty)$, and $\al$ from Pisier theorem be in $(0,1]$. Then 
$$
\|\Delta f\|_{L^p(X)} \ge c(p, X) d^\al \|f\|_{L^p(X)}\,.
$$
\end{thm}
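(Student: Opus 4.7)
The plan is to combine Pisier's holomorphic extension of $P_t$ to the sector $A_\al$ with a contour deformation of the representation $f=\int_0^\infty P_t(\Delta f)\,dt$, translated into a Phragm\'en--Lindel\"of estimate for the \emph{analytic paraproduct} $g_w:=\sum_S\hat g(S)w^{|S|}\eps^S$. The target exponent $d^\al$ will emerge from the interplay between the Pisier angle $\pi\al/2$ and the order-$(d+1)$ vanishing of $g_w$ at the boundary apex $w=0$ of the region $\Om_\al:=e^{-A_\al}\subset\bD$.

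I would fix $f\in\cT_d(X)$, set $g:=\Delta f\in\cT_d(X)$, and start from $f=\int_0^\infty P_tg\,dt$ (valid since $\bE g=0$). By Theorem \ref{pisier}, $P_z$ extends holomorphically to $A_\al$ with uniform norm $K=K(p,X)$; equivalently, in the variable $w=e^{-z}$, the $L^p(X)$-valued analytic function $w\mapsto g_w$ is bounded by $K\|g\|_{L^p(X)}$ on $\Om_\al$. Because $g\in\cT_d$, this function has a zero of order $\ge d+1$ at $w=0$.

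The heart of the argument is a Phragm\'en--Lindel\"of/Schwarz--Pick estimate on $\Om_\al$ that converts this high-order vanishing at the boundary apex into quantitative decay along rays of $A_\al$: I aim for
\[
\|P_zg\|_{L^p(X)}\le CK\|g\|_{L^p(X)}\,e^{-c d^\al\,\mathrm{Re}(z)},\qquad z\in A_\al,\ \mathrm{Re}(z)\gtrsim d^{-\al}.
\]
The appearance of the exponent $d^\al$ (rather than $d$) is forced by the wedge geometry: the conformal straightening $z\mapsto z^{1/\al}$ sends $A_\al$ to a half-plane and rescales an order-$N$ vanishing at the apex into an order-$N\al$ vanishing, so a Phragm\'en--Lindel\"of majorant in the straightened picture should yield precisely the rate $d^\al$.

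With this decay in hand, I would deform the contour of $\int_0^\infty P_tg\,dt$ from $[0,\infty)$ to a ray $\arg z=\theta$ with $|\theta|<\pi\al/2$ (justified by Cauchy's theorem and the decay just established), and integrate to obtain
\[
\|f\|_{L^p(X)}\le \int_0^\infty \|P_{re^{i\theta}}g\|_{L^p(X)}\,dr\le \frac{C(p,X)}{d^\al}\|g\|_{L^p(X)}.
\]
The main obstacle is the Phragm\'en--Lindel\"of step: an ansatz that extracts too much decay (say, rate $d$) would violate Pisier's boundedness on the boundary rays of $A_\al$ and would in fact prove the full Mendel--Naor conjecture; an ansatz that is too cautious reproduces only the $d^{\al-\eps}$ of \cite{EI1}. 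Calibrating the argument so that the wedge geometry yields \emph{exactly} the exponent $d^\al$ is the delicate technical point, and it is precisely here that the analytic paraproduct viewpoint is essential.
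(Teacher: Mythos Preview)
Your proposal has a genuine gap at precisely the point you flag as ``the delicate technical point.'' The heuristic you give for the Phragm\'en--Lindel\"of step is miscalibrated: the order-$(d{+}1)$ vanishing of $g_w$ is at $w=0$, which under $w=e^{-z}$ corresponds to $z=\infty$ in the sector $A_\al$, \emph{not} to the apex $z=0$. The straightening $z\mapsto z^{1/\al}$ acts at the apex, so it does not ``rescale an order-$N$ vanishing into an order-$N\al$ vanishing'' in the way you describe. What the vanishing at $w=0$ actually buys you (via the ordinary Schwarz lemma on a small disc $\{|w|<\rho\}\subset G_\al$) is the much stronger decay $\|P_zg\|\lesssim e^{-(d+1)\,\mathrm{Re}\,z}\|g\|$, but only for $\mathrm{Re}\,z\ge R_0(\al)$, a \emph{fixed} constant; integrating then yields only the trivial bound $\|f\|\lesssim\|\Delta f\|$. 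Bridging the range $\mathrm{Re}\,z\in[d^{-\al},R_0]$ so as to extract the rate $d^\al$ is exactly the heat-smoothing estimate of \cite{MN} with $A=1/\al$, and you have not supplied an argument for it---nor does the contour deformation help, since the difficulty is the same along every ray in $A_\al$.

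The paper's proof does not go through heat smoothing at all, and its use of ``analytic paraproduct'' is different from yours. It works with $F(w):=f_w$ rather than $g_w$, observes that $wF'_w(w)=(\Delta f)_w$ so that Pisier's theorem gives $\|F'_w\|_{H^\infty(O_\al;L^p(X))}\le M\|\Delta f\|_{L^p(X)}$, and then composes with the conformal map $\vf:\bD\to O_\al$, $\vf(0)=0$, to write $F\circ\vf(z)=\int_0^z (F'_w\circ\vf)(\zeta)\,\vf'(\zeta)\,d\zeta=:T_\vf(F'_w\circ\vf)(z)$. Here $T_\vf$ is the analytic paraproduct \emph{operator} with symbol $\vf$. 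The exponent $d^{-\al}$ then drops out of two concrete lemmas: the Taylor coefficients of $\vf$ satisfy $|c_m^\al|\asymp m^{-1-\al}$ (reflecting the interior angle $\pi\al$ of $O_\al$ at $\pm1$), and the integration operator on the $H^\infty(\bD)$ tail space of order $k$ has norm $\lesssim k^{-1}$ (a convolution estimate). Summing $\sum_{m\ge1} m|c_m^\al|\cdot(d+m-1)^{-1}\lesssim\sum_m m^{-\al}(d+m)^{-1}\lesssim d^{-\al}$ gives $\|F\|_{H^\infty}\le C_\al d^{-\al}\|F'_w\|_{H^\infty}$, and evaluating at $w=1$ finishes. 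No Phragm\'en--Lindel\"of, no decay estimate for $P_t$, and the $d^\al$ is pinned down by the coefficient asymptotics of the conformal map rather than by a calibrated majorant.
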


\begin{rem}
This theorem is not new, see \cite{MN} Theorem 5.1. But the proof is different and it uses the so-called analytic paraproduct operators, about which a lot is known.  Also in the last section we will show how to get rid of $\eps$ easily in Theorem \ref{eps} above \textup(=Theorem 6 of \cite{EI1}\textup).
\end{rem}

\begin{proof}

Given $f$ on $\Om_n$ from the tail space $\cT_d(X)$ let us introduce a new function $F$ of one more variable:
$$
F(w, \eps) := \sum_{|S|>d} w^{|S|} \hat f(S) \eps^S,
$$
and let us recognize that it is a bounded $L^p(X)$-valued function of $w$ in $2$-gone $G_\al:=\{ w: w= e^{-z}, z\in A_\al\}$. This is just reformulation of Pisier's theorem. Moreover it is not only bounded but also holomorphic in $G_\al$, 
$$
F\in H^{\infty}(G_\al; L^p(X))\,.
$$
Notice that the same works for $-G_\al$, as the flip $w\to -w$ can be absorbed by the flip $\eps\to -\eps$. 
Let us consider a domain $O_\al:=G_\al\cup-G_\al$. It is easy to see that its boundary is smooth at all points except $-1$ and $1$, where $O_\al$ forms angle $\pi\al$ by two real analytic curves (actually its boundary is real analytic except for  $\pm 1$, where real analytic curves form an angle $\al\pi$). Operator
$$
 \sum_{|S|}  \hat f(S) \eps^S\to  \sum_{|S|} w^{|S|} \hat f(S) \eps^S
 $$
 is uniformly bounded from $L^p(X)$ to itself by Theorem \ref{pisier}.

 \bigskip
 
 Notice that $wF'_w= \sum_{S} w^{|S|} |S| \hat f (S) \eps^S$ and $\Delta f=  \sum_{S} |S| \hat f (S) \eps^S$. So $wF'(w)$ is  obtained by applying Pisier's Fourier multiplier operator  to $\Delta f$. Therefore we have

\begin{equation}
\label{dF}
\|F'_w(w)\|_{H^\infty(O_\al, L^p(X))} \le C\|wF'_w(w)\|_{H^\infty(O_\al, L^p(X))} \le M \|\Delta  f\|_{L^p(X)}\,.
\end{equation}
The first inequality is a trivial maximal principle for holomorphic functions, the second one is again the same Pisier's theorem.

\bigskip

By the next step we  want the estimate of the following type (for $F'(w)$ built by $f\in \cT_d(X)$)
\begin{equation}
\label{FbydF}
\|F(w)\|_{H^\infty(O_\al, L^p(X))} \le \eps_d \|F'_w(w)\|_{H^\infty(O_\al, L^p(X))} 
\end{equation}
with a small $\eps_d$ for large $d$.

\bigskip

Let $\vf$ be the conformal map from the unit disc $\bD$ onto $O_\al$, $\vf(0)=0$.
Then \eqref{FbydF} is equivalent to

\begin{equation}
\label{FbydF1}
\|F\circ \vf\|_{H^\infty(\bD, L^p(X))} \le \eps_d \|F'\circ \vf\|_{H^\infty(\bD, L^p(X))} 
\end{equation}

Obviously, for $z\in \bD$
$$
F\circ \vf (z) =\int_0^z F'\circ \vf(\zeta) \cdot \vf'(\zeta) d\zeta\,.
$$
Denote $g(\zeta):= F'\circ \vf(\zeta)$ and introduce the operator (sometimes called {\it analytic paraproduct with symbol $\vf$})
$$
T_{\vf} g:= \int_0^z g(\zeta) \cdot \vf'(\zeta) d\zeta\,.
$$
Seems like it was Pommerenke who studied this operator first. Then it  was widely researched, see e.g. \cite{Po}, \cite{CPPR}, \cite{SSV} and the references therein. 
We need to understand the estimate of this operator on the space  of (vector-valued) functions with the property that all their Taylor coefficients at $0$ 
vanish till the order $d-1$. In other words we need to understand the norm of the operator $T_{\psi}$, where $\psi=\int_0^\zeta \zeta^{d-1} \vf'(\zeta)\, d\zeta$.

Let us write the Taylor expansion of conformal map $\vf$:
$$
\vf(z)=c_1^\al z+ c_2^\al z^2 +\dots\,.
$$
Then it is possible to prove
\begin{lem}
\label{coef}
\begin{equation}
|\vf'(z)| \asymp \frac1{|1-z^2|^{1-\al}}\,.
\end{equation}
And  $|c_n^\al | \asymp n^{-1-\al}$.
\end{lem}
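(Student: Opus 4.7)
The plan is to exploit the symmetries of $O_\al$ to locate the preimages of its two corners, apply the corner theorem of conformal mapping theory to extract the local behaviour of $\vf$ there, and then read off the Taylor coefficient decay by singularity analysis.

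\emph{Symmetries and preimages of the corners.} The domain $O_\al$ is invariant under $w\mapsto-w$ (by construction) and under $w\mapsto\bar w$ (since $A_\al$ is symmetric under complex conjugation, hence so are $G_\al$ and $-G_\al$). Normalising $\vf'(0)>0$, uniqueness of the Riemann map forces $\vf(-z)=-\vf(z)$ and $\vf(\bar z)=\overline{\vf(z)}$. Hence $\vf$ is odd, is real on $(-1,1)$, and must send the fixed points $\pm 1\in\pd\bD$ to the only two boundary singular points $\pm 1\in\pd O_\al$. In particular only odd-indexed $c_n^\al$ are non-zero, and the claim $|c_n^\al|\asymp n^{-1-\al}$ is understood to be for these.

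\emph{Local behaviour at the corners and the estimate on $|\vf'|$.} Away from $\pm 1$ the boundary of $O_\al$ is real-analytic, being the image of the rays $\{re^{\pm i\pi\al/2}:r\ge 0\}$ under $z\mapsto e^{-z}$. By Schwarz reflection, $\vf$ therefore continues holomorphically across $\pd\bD\setminus\{\pm 1\}$ with $\vf'$ bounded above and below there. At each corner the opening angle is $\pi\al$, and Lehman's corner theorem (cf.\ Pommerenke, \emph{Boundary Behaviour of Conformal Maps}) gives a convergent expansion
$$
\vf(z)=\pm 1+a_1^\pm(1\mp z)^\al+\cdots
$$
near $z=\pm 1$, with $a_1^\pm\ne 0$ and every further term of the form $(1\mp z)^{j\al+k}$ with $j\al+k>\al$. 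Differentiating yields $\vf'(z)\sim -\al a_1^\pm(1\mp z)^{\al-1}$ as $z\to\pm 1$, and combining with the non-vanishing boundedness of $\vf'$ on the remainder of $\overline{\bD}$ gives $|\vf'(z)|\asymp|1-z^2|^{\al-1}$.

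\emph{Coefficient asymptotics.} Darboux's method (or the Flajolet--Odlyzko transfer theorem) applied to $\vf'$ extracts the dominant asymptotic from its two algebraic singularities at $\pm 1$: all other terms in the Puiseux expansion have strictly larger exponent and so contribute coefficient asymptotics of strictly smaller order. Oddness of $\vf$ forces $a_1^-=-a_1^+$, and the two singular contributions then combine constructively in the even coefficients of $\vf'$, giving $[z^{2k}]\vf'\asymp k^{-\al}$ and hence
$$
|c_n^\al|=\tfrac{1}{n}\bigl|[z^{n-1}]\vf'(z)\bigr|\asymp n^{-1-\al}\quad\text{for odd }n.
$$
The main technical input is the Puiseux expansion at each corner; its real-analyticity hypothesis is guaranteed by the explicit parametrisation of $\pd O_\al$, and the rest is bookkeeping.
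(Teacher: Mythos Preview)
Your argument is correct, and it is more complete than the paper's own treatment in Section~\ref{add}. Both approaches rest on the same two structural facts: the boundary of $O_\al$ is real-analytic away from $\pm 1$, and at each of these points it forms an interior angle $\pi\al$. From there, however, the routes diverge.

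The paper proceeds by hand: it extends $\vf$ by Schwarz reflection across the analytic arcs to a slightly larger domain $\cR\supset\overline{\bD}$ whose boundary touches $\bT$ only at $\pm 1$, then deforms the Cauchy integral for $c_m$ onto $\partial\cR$. On that contour $|z|^{-m}$ decays like $e^{-c m|y|}$ and $|\vf'|\lesssim |y|^{\al-1}$, and a direct computation of $\int e^{-cmy}y^{\al-1}\,dy$ gives the upper bound $|c_m|\lesssim m^{-1-\al}$. The paper does not supply the matching lower bound (and does not need it, since only $\sum m|c_m^\al|(d+m)^{-1}\lesssim d^{-\al}$ is used downstream).

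You instead invoke Lehman's corner expansion to obtain a genuine Puiseux series for $\vf$ at each corner, and then apply Darboux / Flajolet--Odlyzko transfer to read off the exact coefficient asymptotic. This is the same contour-deformation idea packaged as a theorem, but it buys you more: you get both directions of the $\asymp$ for free, and you correctly isolate the parity issue (oddness of $\vf$ forces $c_{2k}^\al=0$, so the two singular contributions reinforce in the surviving odd coefficients rather than cancel). The paper's statement of the lemma glosses over this parity point, and your observation that the claimed $\asymp$ can only hold along odd $n$ is a genuine clarification.
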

See Section \ref{add} for the proof.

\bigskip

We can write
$$
T_{\vf} g= \int_0^z g(\zeta) \cdot \vf'(\zeta) d\zeta =c_1^\al \int_0^z g d\zeta + 2c_2^\al\int_0^z g\cdot \zeta d\zeta +\dots +
$$
$$
 mc_m^\al \int_0^z g\cdot \zeta^{m-1} d\zeta+\dots\,.
$$

Function $g(\zeta)\cdot \zeta^{m-1}$ in our case is from $\cT_{k}, k\ge d,$ and its $H^\infty(\bD)$ norm is exactly the $H^\infty(\bD)$ norm of $g$ itself.

So we need to understand how to estimate the norm of integration operator on $\cT_{d+m}, m\ge 0,$ for all $m$ in $H^\infty(\bD)$  norm (vector-valued, but this will not ne important).

\begin{lem}
\label{conv}
Let $k$ be positive integer. There exists an $L^1(\bT)$ function $s$  with $L^1(\bT)$  norm  at most $\frac{C_0}{k}$ such that $\hat s(k+j), j\ge 0$, is $\frac1{k+j}$.
\end{lem}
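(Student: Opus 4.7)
The plan is to give a direct explicit construction of $s$ using the Poisson kernel, and then exploit its positivity to bound the $L^1$ norm with essentially no loss. Specifically, I would set
$$s(\theta):=e^{ik\theta}\int_0^1 r^{k-1}P_r(\theta)\,dr,$$
where $P_r(\theta)=\sum_{n\in\bZ}r^{|n|}e^{in\theta}$ is the Poisson kernel. The underlying identity driving the construction is $\frac1n=\int_0^1 r^{n-1}\,dr$ for $n\ge 1$, together with the fact that the Poisson kernel samples $r^{|n|}$ at every integer frequency simultaneously.

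First I would compute the Fourier coefficients by Fubini. Multiplication by $e^{ik\theta}$ shifts frequencies, so
$$\hat s(n)=\int_0^1 r^{k-1}\,\widehat{P_r}(n-k)\,dr=\int_0^1 r^{k-1+|n-k|}\,dr=\frac{1}{k+|n-k|}.$$
In particular $\hat s(k+j)=1/(k+j)$ for every $j\ge 0$, which is exactly the required property; the values at the frequencies $n<k$ come out for free and do not affect the conclusion.

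Next I would bound $\|s\|_1$. Since $r^{k-1}\ge 0$ and $P_r\ge 0$, the function $e^{-ik\theta}s(\theta)$ is nonnegative, so $|s(\theta)|=e^{-ik\theta}s(\theta)$ pointwise, and by Fubini combined with $\|P_r\|_1=\widehat{P_r}(0)=1$,
$$\|s\|_1=\int_{\bT}\int_0^1 r^{k-1}P_r(\theta)\,dr\,\frac{d\theta}{2\pi}=\int_0^1 r^{k-1}\,dr=\frac{1}{k}.$$
Thus $C_0=1$ suffices.

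I do not expect any serious obstacle; the only subtle point is the choice of construction. The naive candidate $\sum_{n\ge k}z^n/n=-\log(1-z)-\sum_{n=1}^{k-1}z^n/n$ has $L^1$ norm of order $\tfrac{\log k}{k}$ (visible from the logarithmic singularity at $z=1$ together with an Abel summation tail estimate), so it falls short by exactly one logarithm. The whole point of the Poisson-kernel construction above is to use the freedom in prescribing $\hat s(n)$ for $n<k$ — taking $\hat s(n)=1/(k+|n-k|)$ rather than $0$ — to absorb this logarithmic loss by a clean positivity argument.
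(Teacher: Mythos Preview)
Your proof is correct, and in fact more elementary than the paper's. The paper takes the bounded function $S(x)=\sum_{j\ge 1}\frac{\sin jx}{j}$, builds a trigonometric Lagrange interpolation polynomial $L_k$ of degree $<k$ at the nodes $x_r=\frac{\pi r}{k+1}$ (odd $r$), observes that $S-L_k$ changes sign exactly at those nodes, and then computes $\|S-L_k\|_1$ via a duality trick: pairing against the derivative of the triangular cosine $c((k+1)x)$, whose sign pattern matches that of $S-L_k$ and whose low Fourier coefficients vanish, reduces the $L^1$ norm to $\langle S,\,c'((k+1)x)\rangle$, which is $O(1/k)$ by an oscillation estimate. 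This interpolation/duality argument is elegant but indirect, and the paper only runs it for even $k$.

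Your Poisson-kernel construction $s(\theta)=e^{ik\theta}\int_0^1 r^{k-1}P_r(\theta)\,dr$ sidesteps all of this: positivity of $P_r$ turns the $L^1$ computation into Tonelli plus $\|P_r\|_1=1$, yielding the sharp value $\|s\|_1=1/k$ (so $C_0=1$), and the Fourier side is the single identity $\int_0^1 r^{m-1}\,dr=1/m$. The one technical point you should perhaps make explicit is that $s$ is well-defined only almost everywhere (the inner integral diverges at $\theta=0$), but Tonelli already guarantees $\int_0^1 r^{k-1}P_r(\theta)\,dr<\infty$ for a.e.\ $\theta$ and $s\in L^1$, so this is harmless. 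Your closing remark about the naive candidate losing a logarithm, and the role of the freedom at frequencies $n<k$, is exactly the right diagnosis and is implicit in the paper's approach as well (there the correction is $L_k$ rather than the symmetric Poisson tail).
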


Suppose this Lemma is proved. The integration operator maps $\zeta^{k+j}$ into $\frac{\zeta^{k+j+1}}{k+j+1}$. The convolution operator
$s(\zeta)\star \cdot$ maps $\zeta^{k+j}$ into $\frac{\zeta^{k+j}}{k+j}$. Thus the integration operator is division by $\zeta$ composed with convolution with $s$. But devision on $\zeta$ does not change the norm on tail spaces of $H^\infty(\bD)$ (regardless of whether it is vector valued or scalar valued). Hence,  if Lemma \ref{conv} is proved then  we can estimate $g\in H^\infty(\bD; L^p(X))$, $g(0)=0, g'(0)=0,\dots , g^{(d-1)}(0)=0$:
$$
\|Tg\|_{H^\infty(\bD; L^p(X))} \le C_0\|g\|_{H^\infty(\bD; L^p(X))} \sum_{m=1}^\infty m|c_m^\al | \frac1{d+m-1}\,.
$$
Using Lemma \ref{coef} we conclude
$$
\|Tg\|_{H^\infty(\bD; L^p(X))} \le C\|g\|_{H^\infty(\bD; L^p(X))} \sum_{m=1}^\infty \frac1{m^\al} \frac1{d+m-1}\le
$$
$$
C_\al\|g\|_{H^\infty(\bD; L^p(X))} d^{-\al}\,.
$$

Hence we proved that
\begin{equation}
\label{FbydF2}
\| F \circ \vf \|_{H^\infty(\bD; L^p(X))} \le C_\al\|F'_w\circ \vf\|_{H^\infty(\bD; L^p(X))} d^{-\al}\,.
\end{equation}
Then we get \eqref{FbydF}:
$$
\| F \|_{H^\infty(O_\al; L^p(X))} \le C_\al\|F'_w\|_{H^\infty(O_\al ; L^p(X))} d^{-\al}\,.
$$
We can now combine this with \eqref{dF} and obtain
$$
\| F \|_{H^\infty(O_\al; L^p(X))} \le C_\al\, M \|\Delta F\|_{L^p(X))} d^{-\al}\,.
$$
But $F(1)=f$. Hence we get the proof of the theorem modulo Lemma \ref{conv}:
$$
\| f \|_{L^p(X))} \le C_\al\, M \|\Delta F\|_{L^p(X))} d^{-\al}, \quad \forall f\in \cT_d(X)\,.
$$

\bigskip

\begin{proof}
The proof of Lemma \ref{conv}. We assume that $k$ is even, which is enough for the proof. We consider first $S(x) = \sum_{j=1}^\infty \frac{\sin jx}{j}$, it is bounded and has Fourier coefficients 
as we wish: $1/j$, $j\neq 0$. Now we wish to change its Fourier coefficients in the interval $j\in [-k, k]$ and not change the Fourier coefficients outside this interval, and make the $L^1(-\pi, \pi)$ norm of modified function to be  at most $C_0/k$. Consider nodes $x_r:=\frac{\pi r}{k+1}$, $r=-k+1, \dots, -3, -1, 1, 3, \dots, k-1$. The number of nodes is $k$. Construct the Lagrange trigonometric interpolation polynomial $L_k(x)$,
$$
L_k(x)=\sum_r S(x_r) \frac{\Pi_{m\neq r} (\sin x- \sin x_m)}{\Pi_{m\neq r} (\sin x_r- \sin x_m)}\,.
$$
Clearly $L_k(x)$ has non-zero Fourier coefficients only on $[-k+1, k-1]$.
It is easy to check that it is an odd function. Notice that the sign of $S(x) - L_k(x)$ alternates, it is a positive function on $[0, \frac{\pi}{k+1})$, negative on $(\frac{\pi}{k+1}), \frac{3\pi}{k+1})$, et cetera. 

Consider ``triangular" $\cos$, call it $c(x)$, it is linear on $[-\pi, 0]$, linear on $[0, \pi]$ and $c(0)=1, c(\pm \pi)=-1$.  Its integral vanishes. So if we consider $c((k+1)x)$ its Fourier coefficients vanish on $[-k, k]$. Then $c'((k+1) x)$ also has its Fourier coefficients vanishing on $[-k, k]$, in particular, the scalar product in $L^2(-\pi, \pi)$
$$
\langle L_k(x), c'((k+1) x)\rangle=0\,.
$$
Therefore,
$$
\langle S(x)-L_k(x), c'((k+1) x)\rangle=\langle S(x), c'((k+1) x)\rangle\,.
$$
But $c'((k+1)x=\pm 1$,  moreover the pattern of signs repeats the pattern of signs  of  $S(x) - L_k(x)$, namely it is a positive function on $[0, \frac{\pi}{k+1})$, negative on $(\frac{\pi}{k+1}, \frac{3\pi}{k+1})$, et cetera. 
We conclude that
$$
\|S(x)-L_k(x)\|_1=\langle S(x)-L_k(x), c'((k+1) x)\rangle= \langle S(x), c'((k+1) x)\rangle\,.
$$
But obviously the right hand side here is  at most $C_0/k$ again by noticing that the oscillation of $S$ on intervals of order $\asymp 1/k$ is $c/k$.
Lemma \ref{conv} is proved and thus the tail theorem is proved as well.

\end{proof}

\end{proof}

\begin{rem}
Lemma \ref{conv} should be very well known and widely used, but I am grateful to Rostislav Matveev \cite{RM} for this elegant proof of Lemma \ref{conv}. 
\end{rem}

\section{On Bernstein--Markov inequality and the dependence on $X$ and $p$}
\label{BM}

By $L^p(X)$ we always mean $L^p(\Om_n; X)$.
 
We first cite four theorems from \cite{EI1}.
\begin{thm}[Eskenazis--Ivanisvili]
\label{allX}
Let $X$ be an arbitrary Banach space and $p\in [1, \infty]$. Then
$$
\|\Delta f\|_{L^p(X)} \le d^2 \|f\|_{L^p(X)},\quad \forall f\in \cP_d(X)\,.
$$
\end{thm}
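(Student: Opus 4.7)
The plan is to reduce the statement to the classical Markov inequality on the interval $[-1,1]$, using the heat semigroup only through its (universally available) contractivity. For $f=\sum_{|S|\le d}\hat f(S)\eps^S\in\cP_d(X)$, I introduce the auxiliary $L^p(\Om_n;X)$-valued polynomial of the real variable $z$,
\[
\phi(z) := \sum_{k=0}^{d} z^k\,\hat f_k, \qquad \hat f_k := \sum_{|S|=k}\hat f(S)\,\eps^S,
\]
which has degree at most $d$. The two basic identifications are immediate from the Walsh expansion: $\phi(1)=f$ and $\phi'(1)=\sum_{k=0}^d k\,\hat f_k=\Delta f$. Thus the theorem reduces to the sup-norm comparison
\[
\|\phi'(1)\|_{L^p(X)}\;\le\;d^2\sup_{z\in[-1,1]}\|\phi(z)\|_{L^p(X)},
\]
together with a uniform bound of the right-hand side by $\|f\|_{L^p(X)}$.

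To bound the supremum I use that $P_t=e^{-t\Delta}$ is a tensor product of convex combinations of the identity and a coordinate flip, and hence a contraction on $L^p(\Om_n;X)$ for \emph{every} Banach space $X$ (this is exactly why no geometric hypothesis on $X$ is needed here). For $z=e^{-t}$, $t\ge 0$, one has $\phi(z)=P_t f$. For $z=-e^{-t}$ a direct Walsh computation gives $\phi(z)=P_t\tilde f$ with $\tilde f(\eps):=f(-\eps)$; since the uniform measure on $\Om_n$ is invariant under $\eps\mapsto -\eps$, $\|\tilde f\|_{L^p(X)}=\|f\|_{L^p(X)}$. These two cases cover all $z\in[-1,1]\setminus\{0\}$, and $\phi(0)=\bE f$ is handled by Jensen. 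Hence $\sup_{z\in[-1,1]}\|\phi(z)\|_{L^p(X)}\le\|f\|_{L^p(X)}$.

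The last ingredient is the classical Markov inequality $|p'(x)|\le d^2\|p\|_{\infty,[-1,1]}$ for scalar polynomials of degree $\le d$ on $[-1,1]$. Its extension to Banach-valued polynomials is a one-line duality step: apply the scalar inequality to $\xi^*\circ p$ for each $\xi^*$ in the unit ball of the dual and take a supremum. Applying this at $x=1$ to $\phi$ (with the target Banach space being $L^p(\Om_n;X)$) and combining with the previous paragraph yields $\|\Delta f\|_{L^p(X)}=\|\phi'(1)\|_{L^p(X)}\le d^2\|f\|_{L^p(X)}$. I do not anticipate a real obstacle; the only subtle point is to exploit both halves of $[-1,1]$ rather than just $[0,1]$, since a reduction to $[0,1]$ via an affine change of variable would cost an extra factor of $2$ in the Markov constant and yield only $2d^2$ in place of the sharp $d^2$.
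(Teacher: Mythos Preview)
Your argument is correct. The paper does not supply its own proof of this statement; it merely cites it from Eskenazis--Ivanisvili \cite{EI1}, and your proof is exactly the standard one given there: realize $\phi(e^{-t})=P_t f$, use that $P_t$ is an averaging operator and hence an $L^p(\Om_n;X)$-contraction for every Banach space $X$ and every $p\in[1,\infty]$ (extended to $z\in[-1,0]$ via the flip $\eps\mapsto-\eps$), and then apply the scalar Markov inequality on $[-1,1]$ to $\xi^*\circ\phi$ and take the supremum over $\xi^*$ in the unit ball of $(L^p(\Om_n;X))^*$.
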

\begin{thm}[Eskenazis--Ivanisvili]
\label{etaX}
Let $X$ be a Banach space and $p\in [1, \infty]$. Then if  for all $n$
$$
\|\Delta f\|_{L^p(X)} \le (1-\eta) d^2 \|f\|_{L^p(X)},\quad \forall f\in \cP_d(X),
$$
then $X$ is of finite co-type.
\end{thm}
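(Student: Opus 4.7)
The plan is to argue the contrapositive: if $X$ has no finite cotype, then for every $\eta > 0$ I can find $n$ and $f \in \cP_d(X)$ on $\Om_n$ violating $\|\Delta f\|_{L^p(X)} \le (1-\eta) d^2 \|f\|_{L^p(X)}$. The construction has two independent ingredients: a scalar Chebyshev extremal on the cube that saturates the classical Markov inequality in $L^\infty$, and a Maurey--Pisier $\ell_\infty^N$-block amplification that promotes an $L^\infty$ saturation into an $L^p$ one for $X$-valued polynomials.

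First I would build the scalar extremal. Let $T_d$ denote the degree-$d$ Chebyshev polynomial of the first kind, put $\bar\eps := \tfrac1n\sum_{i=1}^n \eps_i$, and set $f_0(\eps) := T_d(\bar\eps) \in \cP_d(\bR)$. Then $\|f_0\|_{L^\infty} = 1$, attained at $\eps = \pm(1,\dots,1)$. Writing the discrete derivative $\pd_i f_0 = \tfrac12(f_0(\eps) - f_0(\sigma_i\eps))$ with $\sigma_i$ the $i$-th coordinate flip and Taylor-expanding $T_d$ at $\bar\eps$ using V.~A.~Markov's bound $\|T_d''\|_{L^\infty[-1,1]} \lesssim d^4$, one finds
\[
\Delta f_0(\eps) \;=\; \bar\eps\, T_d'(\bar\eps) \;+\; O(d^4/n)
\]
uniformly in $\eps$. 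Since $\sup_{x \in [-1,1]} |x T_d'(x)| = T_d'(1) = d^2$, this yields $\|\Delta f_0\|_{L^\infty} \ge (1-\delta) d^2$ whenever $n \gtrsim d^2/\delta$.

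Next I would amplify to $X$-valued via Maurey--Pisier: failure of finite cotype means $\ell_\infty^N$ is $(1+\delta)$-finitely representable in $X$ for every $N$ and every $\delta>0$. Take $N := 2^n$, pick vectors $\{e_a : a \in \Om_n\}\subset X$ realising this embedding, and set
\[
f(\eps) \;:=\; \sum_{a \in \Om_n} f_0(a\eps)\, e_a \;\in\; \cP_d(X).
\]
Because $\Delta$ commutes with the multiplicative shift $\tau_a\eps = a\eps$ on the cube (an immediate Fourier check), $\Delta f(\eps) = \sum_a (\Delta f_0)(a\eps)\, e_a$. The combinatorial crux is that for each fixed $\eps$ the map $a\mapsto a\eps$ is a bijection of $\Om_n$, so pointwise in $\eps$,
\[
\max_a |f_0(a\eps)| = \|f_0\|_{L^\infty}, \qquad \max_a |(\Delta f_0)(a\eps)| = \|\Delta f_0\|_{L^\infty}.
\]
The $\ell_\infty^N$ embedding then gives $\|f(\eps)\|_X \le 1+\delta$ and $\|\Delta f(\eps)\|_X \ge (1-\delta)^2 d^2$ pointwise, hence in every $L^p$,
\[
\frac{\|\Delta f\|_{L^p(X)}}{\|f\|_{L^p(X)}} \;\ge\; \frac{(1-\delta)^2}{1+\delta}\, d^2,
\]
which exceeds $(1-\eta)d^2$ for $\delta$ sufficiently small relative to $\eta$, contradicting the hypothesis.

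The hard part is the scalar Taylor estimate: one has to control the remainder against the leading term $\bar\eps T_d'(\bar\eps)$, which forces the regime $n \gg d^2/\eta$ via the V.~A.~Markov bound on $T_d''$. The rest is essentially bookkeeping; the real structural input is the pointwise bijectivity of shifts, which is what allows a single scalar $L^\infty$ extremal to be amplified, through an $\ell_\infty^N$ block inside $X$, into an $X$-valued $L^p$ extremal.
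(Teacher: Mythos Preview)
The paper does not prove this theorem; it is merely quoted from \cite{EI1} as one of four results attributed to Eskenazis--Ivanisvili, so there is no ``paper's own proof'' to compare against. Your argument is correct and is essentially the construction used in \cite{EI1}: the Chebyshev polynomial $T_d$ composed with the symmetric linear form $\bar\eps$ gives a scalar polynomial nearly saturating the Markov constant $d^2$ in $L^\infty$, and the Maurey--Pisier theorem (finite representability of $\ell_\infty$ in any space without finite cotype) lets you amplify this pointwise via the shift-averaging trick $f(\eps)=\sum_a f_0(a\eps)e_a$, turning an $L^\infty$ extremal into an $L^p(X)$ extremal for every $p$. The key structural observation---that $a\mapsto a\eps$ is a bijection of $\Om_n$, so the pointwise $\ell_\infty^N$-norm of the coefficient vector equals the global $L^\infty$-norm of $f_0$---is exactly right.

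Two minor slips worth fixing: the pointwise lower bound should read $\|\Delta f(\eps)\|_X \ge (1+\delta)^{-1}\|\Delta f_0\|_{L^\infty}$, giving a final ratio $\ge (1-\delta)(1+\delta)^{-2}d^2$ rather than $(1-\delta)^2(1+\delta)^{-1}d^2$; and in the Taylor step you only need to evaluate at $\eps=(1,\dots,1)$, so the claim about $\sup_x|xT_d'(x)|$ is not actually used. Neither affects the validity of the argument.
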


\begin{thm}[Eskenazis--Ivanisvili]
\label{KX}
Let $X$ be a $K$-convex Banach space and $p\in (1, \infty)$. Then
$$
\|\Delta f\|_{L^p(X)} \le C(p, X)d^{2-\eps(p, X)} \|f\|_{L^p(X)},\quad \forall f\in \cP_d(X),
$$
\end{thm}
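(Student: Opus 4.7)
I would reduce Theorem \ref{KX} to a vector-valued corner Bernstein--Markov inequality on the domain $O_\al$ from the proof of Theorem \ref{KXal}, and then derive that inequality from a Bernstein--Walsh extension combined with Cauchy's formula. The setup mirrors the lower-bound case: for $f\in\cP_d(X)$ define
\[
F(w):=\sum_{|S|\le d} w^{|S|}\hat f(S)\,\eps^S,
\]
a polynomial of degree $d$ in $w$ valued in $L^p(X)$. Exactly as in the proof of Theorem \ref{KXal}, Pisier's Theorem \ref{pisier} gives $F\in H^\infty(O_\al;L^p(X))$ with norm at most $C(p,X)\|f\|_{L^p(X)}$, and $\Delta f=F'(1)$. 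The theorem therefore follows from the corner estimate
\[
\|F'(1)\|_{L^p(X)}\ \le\ C(\al)\,d^{2-\al}\,\|F\|_{H^\infty(O_\al;L^p(X))},
\]
valid for every polynomial $F$ of degree $d$ with values in a Banach space; this will give the conclusion with $\eps(p,X)=\al(p,X)$, the Pisier angle of $X$.

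To prove the displayed inequality I would use Bernstein--Walsh. Let $M:=\|F\|_{H^\infty(O_\al;L^p(X))}$. Applying the scalar Bernstein--Walsh inequality to each $\langle F,x^*\rangle$ with $x^*\in (L^p(X))^*$ and taking the supremum yields the vector-valued extension
\[
\|F(w)\|_{L^p(X)}\ \le\ M\exp\!\big(d\,g(w)\big),\qquad w\notin O_\al,
\]
where $g$ is the Green's function of $\bC\setminus\overline{O_\al}$ with pole at $\infty$. Since $O_\al$ has interior angle $\pi\al$ at the corner $w=1$, the exterior has interior angle $\pi(2-\al)$ there, and a local conformal straightening at the corner (Kellogg--Warschawski, using that $\partial O_\al$ is real-analytic except at $\pm 1$ and meets at angle $\pi\al$ at those points) yields
\[
g(1+z)\ \le\ C\,|z|^{1/(2-\al)}\qquad\text{for }|z|\text{ small.}
\]
Because $F$ is entire (it is a polynomial), Cauchy's formula on $|w-1|=\rho$ is valid and gives
\[
\|F'(1)\|_{L^p(X)}\ \le\ \frac{1}{\rho}\max_{|w-1|=\rho}\|F(w)\|_{L^p(X)}\ \le\ \frac{M}{\rho}\exp\!\big(Cd\,\rho^{1/(2-\al)}\big).
\]
Choosing $\rho=c\,d^{-(2-\al)}$ makes the exponential factor $O(1)$ while $1/\rho\asymp d^{2-\al}$, yielding the desired bound.

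\textbf{Main obstacle.} The only non-routine ingredient is the Green's function estimate at the corner with the explicit exponent $1/(2-\al)$, which must be uniform in all data. One proves it either by explicit conformal transplant to the model sector of opening $\pi(2-\al)$, or by constructing a subharmonic barrier of the form $\mathrm{Re}((w-1)^{1/(2-\al)})$ in an appropriate local branch near the corner, and comparing by the maximum principle. Once this is in hand, the vector-valued Bernstein--Walsh (routine by duality), the Bochner Cauchy formula for entire $L^p(X)$-valued polynomials, and the one-parameter optimization are standard, and the $d^{2-\al}$ exponent is simply read off at the critical $\rho$.
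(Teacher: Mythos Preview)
The paper does not give its own proof of Theorem~\ref{KX}; it is one of four results quoted from \cite{EI1} at the start of Section~\ref{BM}, so there is nothing to compare against directly.

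Your argument is nonetheless correct, and it is precisely the corner/Green's-function mechanism the paper uses to prove its \emph{own} Bernstein--Markov results (Theorems~\ref{RXf} and~\ref{main}). In those proofs the paper bounds the subharmonic function $H(z)=\log\|\cdot\|_p$ on a scaled copy of the Pisier domain and reads off the value at $z=1$ from the Green's-function asymptotics $G_{\beta,\al}(1)\asymp d^{-\beta/(2-\al)}$ with $\beta=2-\al$ (see the end of the proof of Theorem~\ref{main}); your choice $\rho\asymp d^{-(2-\al)}$ is exactly the same balancing. Bernstein--Walsh is just the subharmonic majorization $\log\|F(w)\|\le \log M+d\,g(w)$ repackaged, and the Cauchy bound at radius $\rho$ replaces evaluating the subharmonic inequality at the corner. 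Your route has the advantage of giving the explicit exponent $\eps(p,X)=\al(p,X)$ directly, consistent with what the paper obtains for $|\nabla f|_X$.

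The only point requiring care is the uniformity of the corner estimate $g(1+z)\lesssim |z|^{1/(2-\al)}$; this follows from the Warschawski asymptotics invoked in Section~\ref{add2}, or from the local barrier you sketch.
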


\begin{thm}[Eskenazis--Ivanisvili]
\label{RX}
Let $X=\bR$  and $p\in (1, \infty)$. Then
$$
\|\Delta f\|_{L^p} \le C(p)d^{2- \frac{2}{\pi} \arcsin\frac{2\sqrt{p-1}}{p}} \|f\|_{L^p},\quad \forall f\in \cP_d(X)\,.
$$
\end{thm}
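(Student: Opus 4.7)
The plan is to run the same analytic-continuation machinery that is used in the proof of Theorem \ref{KXal}, but now for polynomials rather than for tail functions, and close the argument with a classical Bernstein--Markov inequality at a corner instead of an analytic-paraproduct estimate. Given $f\in \cP_d$, I would first form the auxiliary function
$$F(w,\eps) := \sum_{|S|\le d} w^{|S|}\,\hat f(S)\,\eps^S,$$
which is a polynomial in $w$ of degree at most $d$, satisfies $F(1,\cdot)=f$, and whose derivative in $w$ at $1$ is exactly $\Delta f$. The scalar case $X=\bR$ is the one situation in which the Pisier angle of Theorem \ref{pisier} is explicit and sharp: Weissler's complex hypercontractivity for the noise operator (equivalently, Stein's analytic continuation for symmetric Markov semigroups acting on $L^p(\bR)$) identifies the sharp half-angle of the sector of $L^p$-boundedness of $e^{-z\Delta}$ as $\arcsin(2\sqrt{p-1}/p)$, so one may take
$$\alpha = \frac{2}{\pi}\arcsin\frac{2\sqrt{p-1}}{p}.$$
Consequently $\|F(w)\|_{L^p}\le M(p)\|f\|_{L^p}$ uniformly for $w\in G_\alpha$; the $\eps\mapsto-\eps$ flip used in the proof of Theorem \ref{KXal} extends the bound to $O_\alpha=G_\alpha\cup(-G_\alpha)$, a bigon whose boundary meets at $w=1$ with interior angle exactly $\pi\alpha$.

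With that uniform bound in hand, I would invoke the classical Bernstein--Markov inequality at a corner: for any polynomial $P$ of degree $\le d$ and any Jordan domain $\Omega$ with interior angle $\pi\alpha\in(0,\pi]$ at a boundary point $w_0$,
$$|P'(w_0)| \le C(\alpha)\,d^{2-\alpha}\,\sup_{w\in\Omega}|P(w)|.$$
This interpolates correctly between classical Bernstein ($\alpha=1$, smooth boundary, exponent $1$) and Markov at an interval endpoint ($\alpha\to 0$, exponent $2$). The Banach-valued form needed for $F(w)\in L^p$ follows by testing against each $\phi$ in the unit ball of $(L^p)^*$ and taking a supremum over $\phi$. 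Combining this with the previous step yields
$$\|\Delta f\|_{L^p} = \|\partial_w F(1)\|_{L^p} \le C(\alpha)\,M(p)\,d^{2-\alpha}\,\|f\|_{L^p},$$
which is the claim with exponent $2-\frac{2}{\pi}\arcsin(2\sqrt{p-1}/p)$.

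The main obstacle is justifying the corner Bernstein--Markov inequality with the sharp exponent $2-\alpha$; everything else is bookkeeping on top of Pisier/Weissler, in exact parallel with the scheme of Theorem \ref{KXal}. The inequality is classical in one-variable approximation theory (Pommerenke, Totik), but a self-contained derivation in the present setting can be extracted from the conformal map $\vf:\bD\to O_\alpha$ with $\vf(1)=1$ and the asymptotics $|\vf'(\zeta)|\asymp |1-\zeta^2|^{\alpha-1}$ supplied by Lemma \ref{coef}: writing $P'(1)$ as a Cauchy-type integral of $P$ along a contour in $O_\alpha$ approaching $1$, then estimating the resulting kernel against the degree-$d$ growth of $P\circ\vf$ in $\bD$, produces precisely the factor $d^{2-\alpha}$ and completes the argument.
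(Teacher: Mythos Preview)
The paper does not supply its own proof of Theorem \ref{RX}; the result is simply quoted from \cite{EI1}. So there is no in-paper argument to compare against directly. That said, your strategy is sound and is in spirit the same mechanism the paper deploys for the closely related Theorem \ref{RXf}: bound $\|F(w,\cdot)\|_{L^p}$ on a region with corner angle $\pi\alpha$ at $w=1$ via complex hypercontractivity, then extract the derivative at the corner by a potential-theoretic/Bernstein--Walsh argument. In the paper this is phrased through the subharmonic function $H(z)=\log\|\,\cdot\,\|_p$ and the Green's function of the complement of a shrunk lens (see \eqref{Green}--\eqref{beta}); your ``corner Bernstein--Markov with exponent $2-\alpha$'' is exactly the same computation in different clothing.

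Two small corrections. First, the explicit contraction region for $T_w$ on $L^p(\bR)$ furnished by Weissler/Ivanisvili--Nazarov is the lens $\Om(r)$ of Section \ref{complex}, not the Pisier bigon $O_\alpha$; as the paper explains in Section \ref{add2}, the two domains differ but share the interior angle $\pi\alpha=2\arcsin\frac{2\sqrt{p-1}}{p}$ at $w=1$, so the exponent you obtain is unaffected. Second, the clean proof of the corner inequality is via Bernstein--Walsh: the Green's function $G$ of $\bC\setminus\Om$ with pole at infinity satisfies $G(w)\asymp|w-1|^{1/(2-\alpha)}$ near the corner (exterior angle $(2-\alpha)\pi$), so $|P(w)|\le e^{dG(w)}\sup_\Om|P|$, and a Cauchy estimate on a disc of radius $\asymp d^{-(2-\alpha)}$ about $1$ gives $|P'(1)|\le C\,d^{2-\alpha}\sup_\Om|P|$.

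Your final paragraph, however, does not work as written: $P\circ\vf$ is bounded on $\bD$ but is \emph{not} a polynomial, so there is no ``degree-$d$ growth of $P\circ\vf$'' to invoke, and Lemma \ref{coef} (tailored to the analytic-paraproduct estimate for tail spaces) is not the relevant tool here. Replace that sketch with the Bernstein--Walsh/Green's-function argument above and the proof is complete.
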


Now we will prove the following results.
\begin{thm}
\label{RXf}
Let $X=\bR$,  $p\in [2, \infty)$.  Then
$$
\|\nabla f\|_{L^p} \le C(p)d^{1-\frac1{\pi} \arcsin\frac{2\sqrt{p-1}}{p}} \|f\|_{L^p},\quad \forall f\in \cP_d(X)\,.
$$
If $p\in (1, 2)$ then 
$$
\|\nabla f\|_{L^p} \le C(p)d^{\frac2p -\frac{2\arcsin\frac{2\sqrt{p-1}}{p}}{p\pi}} \|f\|_{L^p},\quad \forall f\in \cP_d(X)\,.
$$
\end{thm}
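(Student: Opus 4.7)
The plan is to deduce Theorem \ref{RXf} by chaining three ingredients available in the scalar case $X=\bR$. First I would invoke the Riesz-transform bound on the Hamming cube: for $1<p<\infty$,
$$
\||\nabla f|\|_{L^p(\Om_n)} \le K_p \|\Delta^{1/2} f\|_{L^p(\Om_n)},
$$
with $K_p$ depending only on $p$. In the scalar setting this is the Lust-Piquard square-function inequality, obtained via Kahane--Khintchine on $g(\eps,\delta) := \sum_i \delta_i \partial_i f(\eps)$ combined with a martingale square-function estimate for $p \ge 2$; the case $p<2$ follows by duality through the adjoint $-\mathrm{div}$. Alternatively one can reproduce this bound by combining Kahane--Khintchine with the Pisier-sector analyticity used in the proof of Theorem \ref{KXal}.

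Next I would plug in the FLP moment inequality recalled in the excerpt at $\beta=1/2$,
$$
\|\Delta^{1/2} f\|_{L^p} \le 4\,\|\Delta f\|_{L^p}^{1/2}\,\|f\|_{L^p}^{1/2},
$$
followed by Theorem \ref{RX} (with $\alpha_p := \tfrac{2}{\pi}\arcsin\tfrac{2\sqrt{p-1}}{p}$),
$$
\|\Delta f\|_{L^p} \le C(p)\, d^{\,2-\alpha_p}\,\|f\|_{L^p}.
$$
Composing the three displays gives, for every $p \in (1, \infty)$,
$$
\|\nabla f\|_{L^p} \le C'(p)\, d^{\,(2-\alpha_p)/2}\,\|f\|_{L^p}.
$$

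For $p\ge 2$ the exponent $(2-\alpha_p)/2 = 1-\tfrac{1}{\pi}\arcsin\tfrac{2\sqrt{p-1}}{p}$ is precisely the one claimed. For $p\in(1,2)$ one has $(2-\alpha_p)/2 < (2-\alpha_p)/p$, so the composed bound \emph{a fortiori} implies the stated (weaker) one and no separate argument is required in that range. The main obstacle is step (i): the scalar Riesz-transform bound. The quickest route is to cite Lust-Piquard--Naor as a black box; if instead one insists on a self-contained derivation within the framework of this paper, the analytic-paraproduct/Pisier-sector machinery from Section \ref{tail} has to be adapted to the auxiliary object $\sum_i \delta_i \partial_i f$ on $\Om_{2n}$, and this is where essentially all the technical effort concentrates.
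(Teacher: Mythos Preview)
Your chain of inequalities is valid for $p\ge 2$ and constitutes a genuinely different route from the paper's. The paper does not pass through $\Delta^{1/2}$ at all: it uses the integral representation of $D_j e^{-t\Delta}f$ from \cite{IVHV} to bound $\||\nabla e^{-t\Delta}f|\|_p$ directly by $\frac{e^{-t}}{(1-e^{-2t})^{1/2}}\|f\|_p$ (for $p\ge 2$), then applies complex hypercontractivity (Weissler, Ivanisvili--Nazarov) and a Green's-function estimate on a lens domain to pass from $e^{-t\Delta}f$ to $f$. Your argument instead collapses everything onto Theorem~\ref{RX} via the scalar Riesz bound and the moment inequality; this is shorter once Theorem~\ref{RX} is taken as a black box, but it does not generalize to the $X$-valued setting of Theorem~\ref{Xf}, since the Riesz bound $\||\nabla f|_X\|_p \le C_p\|\Delta^{1/2}f\|_{L^p(X)}$ can fail there (see Remark~\ref{DN}). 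The paper's direct approach is what allows the extension to Banach-space coefficients.

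For $p\in(1,2)$ your proof has a genuine gap. The inequality $\||\nabla f|\|_p \le K_p\|\Delta^{1/2}f\|_p$ is \emph{false} on the Hamming cube for $1<p<2$ even in the scalar case; this is stated explicitly in the paper (just below Remark~\ref{log}) and is in \cite{BELP}. The duality you sketch goes the wrong way: boundedness of $\nabla\Delta^{-1/2}$ on $L^p$ for $p\ge 2$ dualizes to boundedness of $\Delta^{-1/2}\mathrm{div}$ on $L^q$ for $q\le 2$, i.e.\ to $\|\Delta^{1/2}f\|_q \le K_q\||\nabla f|\|_q$, not to the direction you need. So your stronger exponent $(2-\alpha_p)/2$ for $p<2$ is unsupported. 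The paper handles $1<p<2$ by a separate estimate: applying the Maurey--Pisier contraction principle (Theorem~\ref{mp}) to the symmetrized variables $\tilde\delta_j(t)$ yields $\||\nabla e^{-t\Delta}f|\|_p \le \frac{C(p)e^{-t}}{(1-e^{-2t})^{1/p}}\|f\|_p$, with exponent $1/p$ in place of $1/2$; this worse exponent in the heat-regularized bound is exactly what produces the weaker power $d^{(2-\alpha_p)/p}$ in the conclusion.
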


\bigskip

\begin{rem}
\label{log}
It is almost Theorem 15 of \cite{EI1}, but we get rid of $\log d$ term in the estimate (30) of Theorem 15 of \cite{EI1}.
\end{rem}

We recall the reader  that if $1<p<\infty$ then $\|\Delta^{1/2} f\|_{L^p} \le C(p) \|\nabla f\|_{L^p}$ for scalar valued functions (the result of  E. Ben Efraim and F. Lust-Piquard \cite{BELP}). But the opposite inequality true only for $2\le p<\infty$,  \cite{BELP}. Morally this means that it is more difficult to estimate from above $|\nabla f |$ than $\Delta^{1/2} f$ (even for scalar valued $f$).
Also clearly the power of $d$ doubles up when we pass the estimate from $\|\Delta^{1/2} f\|_{L^p}$ to the estimate of $\|\Delta f\|_{L^p}$.

\bigskip

When we deal with the $\cP_d(X)$ and $X^*$ has type $2$, it also has finite co-type  $r\in [2, \infty)$ by K\"onig--Tzafriri theorem (see \cite{HVNVW2}, Theorem 7.1.14). Then we have the following result.
\begin{thm}
\label{Xf}
Let   $X^*$ be of type $2$ (and automatically of certain co-type $r<\infty$), and $p\in (1, \infty)$,  $\frac1p+\frac1q=1$, then 
$$
\||\nabla f|_X\|_{L^p(X)} \le C(p)d^{2-\frac{2}{\max(q, r)}} \|f\|_{L^p(X)},\quad \forall f\in \cP_d(X),
$$
where
$$
|\nabla f|_X =\big(\sum_{i=1}^n \|D_i f\|_X^2\big)^{1/2}\,.
$$
\end{thm}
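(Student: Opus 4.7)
My plan is to linearize the gradient using the cotype $2$ structure of $X$ (which follows from type $2$ of $X^*$), dualize, and then reduce to a Bernstein--Markov estimate for an $X^*$-valued divergence on the cube. Since $X^*$ has type $2$, the space $X$ has cotype $2$ with constant $\lesssim T_2(X^*)$; combining this with Kahane--Khintchine gives the pointwise inequality
\[
  |\nabla f|_X(\eps) = \Big(\sum_i \|D_i f(\eps)\|_X^2\Big)^{1/2} \ \lesssim\ \Big(\mathbb{E}_\delta \Big\|\sum_i \delta_i D_i f(\eps)\Big\|_X^p\Big)^{1/p}
\]
for an independent Rademacher sequence $\delta=(\delta_1,\dots,\delta_n)$. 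Taking the $L^p(\Om_n)$-norm in $\eps$ reduces the problem to bounding $\|Tf\|_{L^p(\Om_n\times\Om_n';\,X)}$, where $Tf(\eps,\delta):=\sum_i \delta_i D_i f(\eps)$ is an $X$-valued polynomial of degree $\le d$ on the doubled Hamming cube.

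Next I would dualize: testing $Tf$ against $h\in L^q(\Om_{2n};X^*)$ of unit norm and using the self-adjointness of each $D_i$ in $L^2(\Om_n)$, Fubini yields
\[
  \int \langle h,\,Tf\rangle \ =\ \int \Big\langle \sum_i D_i h_i,\, f\Big\rangle, \qquad h_i(\eps):=\mathbb{E}_\delta[\delta_i h(\eps,\delta)].
\]
Since $X^*$ is $K$-convex (type~$2$ implies $K$-convex), the Rademacher projection $h\mapsto \sum_i \delta_i h_i$ is bounded on $L^q(\Om_{2n};X^*)$, so we may restrict to $h=\sum_i \delta_i h_i$; the cotype~$r$ of $X^*$ (furnished by K\"onig--Tzafriri) then controls $(h_i)$ in the mixed norm $L^q(\Om_n;\ell^r(X^*))$. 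Because $f\in\cP_d$, we may moreover take $h_i\in\cP_{d+1}(X^*)$, and the problem reduces to proving the Bernstein--Markov-type bound
\[
  \Big\|\sum_i D_i h_i\Big\|_{L^q(\Om_n;\,X^*)} \ \le\ C\,d^{\,2-2/\max(q,r)}
\]
for all such $(h_i)$ with $\|(\|h_i\|_{X^*})_i\|_{L^q(\Om_n;\ell^r(X^*))}\lesssim 1$.

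For this last step I would treat the two regimes separately. When $q\le r$ (equivalently $p\ge r'$), the exponent $2-2/r$ should come from combining the cotype~$r$ of $X^*$ with Theorem~\ref{KX} applied in the $K$-convex space $X^*$. When $q\ge r$ (equivalently $p\le r'$), Calder\'on--Lions interpolation between this critical-exponent estimate and the trivial endpoint $\|\Delta g\|_{L^1(X^*)}\le d^2\|g\|_{L^1(X^*)}$ from Theorem~\ref{allX} (yielding exponent $2$ at $p=1$) produces the exponent $2-2/q=2/p$, matching $2-2/\max(q,r)$ in this regime. The main obstacle is the critical endpoint $p=r'$: extracting the precise exponent $d^{2-2/r}$ requires carefully marrying the cotype~$r$ information for $X^*$ with the Pisier $K$-convexity machinery behind Theorem~\ref{KX}, and this is where the core technical work lies; the interpolation and trivial $L^1$ endpoint are then routine.
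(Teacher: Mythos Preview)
Your reduction via cotype $2$ of $X$, duality, and $K$-convexity is coherent up to the point where you arrive at the divergence estimate
\[
\Big\|\sum_i D_i h_i\Big\|_{L^q(\Om_n;X^*)}\ \le\ C\,d^{\,2-2/\max(q,r)}\qquad\text{for }(h_i)\in\cP_{d+1}(X^*),\ \ \big\|(h_i)\big\|_{L^q(\ell^r(X^*))}\lesssim 1,
\]
but this is exactly where the argument stalls, and you acknowledge as much. The problem is structural: Theorem~\ref{KX} controls $\|\Delta g\|$ with exponent $2-\eps(p,X)$ for an \emph{unspecified} $\eps$ tied to the Pisier angle, not to the cotype $r$; there is no mechanism in that statement to extract $2-2/r$. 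Moreover, $\sum_i D_i h_i$ is a genuine divergence of an $n$-tuple, not $\Delta$ of a single function, so neither Theorem~\ref{KX} nor Theorem~\ref{allX} applies to it, and your proposed interpolation (between an endpoint you have not established and the $d^2$ bound, which is for $\Delta g$, not for divergences) does not go through. As it stands, the core inequality is left unproved, so the proposal has a real gap.

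The paper's route is entirely different and avoids duality/interpolation. It starts from the explicit heat-kernel formula \eqref{formula} for $D_j e^{-t\Delta}f$, linearizes $|\nabla e^{-t\Delta}f|_X$ over the unit ball of $\ell^2_n(X^*)$, applies H\"older in the $\xi$-variable, and then invokes the Maurey--Pisier contraction principle (Theorem~\ref{mp}) in the space $X^*$: type~$2$ of $X^*$ (via Kahane--Khintchine) bounds the Rademacher average, while the cotype~$r$ enters through the exponent $1/\max(q,r)$ in the tail integral $\int_0^\infty\bP\{|\tilde\delta_1(t)|>s\}^{1/\max(q,r)}\,ds\asymp(1-e^{-2t})^{1/\max(q,r)-1/2}$. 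This yields the sharp-in-$t$ bound
\[
\big\||\nabla e^{-t\Delta}f|_X\big\|_p\ \le\ \frac{C\,e^{-t}}{(1-e^{-2t})^{1-1/\max(q,r)}}\,\|f\|_{L^p(X)}.
\]
The passage from heat-smoothed estimates to $t=0$ is then done by the subharmonic function argument of Section~\ref{complex}: $H(z)=\log\||\nabla F(z,\cdot)|_X\|_p$ is subharmonic, grows like $d\log|z|$, and is controlled on a slit $[-1+d^{-2},1-d^{-2}]$; the Green's function of the complement delivers the factor $d^{2-2/\max(q,r)}$ at $z=1$. The precise exponent thus comes directly from the Maurey--Pisier integral, not from Theorem~\ref{KX} or interpolation.
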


\bigskip

\begin{proof} We prove Theorem \ref{RXf}. 
We recall the formula from \cite{IVHV}:
\begin{equation}
\label{formula}
D_j e^{-t\Delta} f (\eps)= \frac{e^{-t}}{(1-e^{-2t})^{1/2}} \bE_\xi\Big( \delta_j (t) f(\eps_1\cdot\xi_1(t), \dots, \eps_n\cdot\xi_n(t))\Big)\,.
\end{equation}
Here 
$$
\delta_j(t) := \frac{\xi_j(t)-e^{-t}}{(1-e^{-2t})^{1/2}},
$$
where $\xi_j(t)$ are independent random variables having values $\pm 1$ with probabilities $\frac{1\pm e^{-t}}{2}$.

From \eqref{formula} for every $\eps \in \Om_n$ we can write
$$
|\nabla e^{-t\Delta} f(\eps)| =\frac{e^{-t}}{(1-e^{-2t})^{1/2}} \max_{\la: \|\la\|_{\ell^2_n} =1} \Big|\bE_\xi \sum_{j=1}^n \la_j  \delta_j (t) f(\eps\cdot\xi(t))\Big|\,.
$$
Hence,
$$
|\nabla e^{-t\Delta} f(\eps)|\le  \frac{e^{-t}}{(1-e^{-2t})^{1/2}}\max_{\la: \|\la\|_{\ell^2_n} =1}\bE_\xi  \Big|\sum_{j=1}^n \la_j  \delta_j (t) f(\eps\cdot\xi(t))\Big|\le
$$
$$
 \frac{e^{-t}}{(1-e^{-2t})^{1/2}} \max_{\la: \|\la\|_{\ell^2_n} =1}\Big(\bE_\xi  \Big|\sum_{j=1}^n \la_j  \delta_j (t) \Big|^q\Big)^{1/q} \Big(\bE_\xi |f(\eps\cdot\xi)|^p\Big)^{1/p}\,.
$$
Raise it to the power $p$ and integrate:
$$
\||\nabla e^{-t\Delta} f(\eps)|\|_p^p \le \Big(\frac{e^{-t}}{(1-e^{-2t})^{1/2}}\Big)^p \bE_\eps\bE_\xi  |f(\eps\cdot\xi)|^p \cdot   \max_{\la: \|\la\|_{\ell^2_n} =1}\Big(\bE_\xi  \Big|\sum_{j=1}^n \la_j  \delta_j (t) \Big|^q\Big)^{p/q} =
$$
$$
 \bE_\xi\bE_\eps |f(\eps\cdot\xi)|^p \cdot  \Big(\frac{e^{-t}}{(1-e^{-2t})^{1/2}}\Big)^p\cdot \max_{\la: \|\la\|_{\ell^2_n} =1}\Big(\bE_\xi  \Big|\sum_{j=1}^n \la_j  \delta_j (t) \Big|^q\Big)^{p/q}=
$$
$$
\Big(\frac{e^{-t}}{(1-e^{-2t})^{1/2}}\Big)^p\|f\|_p^p \cdot   \max_{\la: \|\la\|_{\ell^2_n} =1}\Big(\bE_\xi  \Big|\sum_{j=1}^n \la_j  \delta_j (t) \Big|^q\Big)^{p/q}\,.
$$

Consider the case $1<q\le 2$. Then we just use 
$$
\Big(\bE_\xi  \Big|\sum_{j=1}^n \la_j  \delta_j (t) \Big|^q\Big)^{p/q}\le \Big(\bE_\xi  \Big|\sum_{j=1}^n \la_j  \delta_j (t) \Big|^2\Big)^{p/2}=1,
$$
because $\{\delta_j(t)\}_{j=1}^n$ is an orthonormal system and $ \|\la\|_{\ell^2_n} =1$.

We will use this later:
\begin{equation}
\label{1q2}
1<q\le 2\Rightarrow \||\nabla e^{-t\Delta} f(\eps)|\|_p^p \le \frac{e^{-t}}{(1-e^{-2t})^{1/2}} \|f\|_p^p
\end{equation}

Now let us consider the case $q>2$. In this case we need to estimate $\Big(\bE_\xi  \Big|\sum_{j=1}^n \la_j  \delta_j (t) \Big|^q\Big)^{1/q}$ differently.  First of all we can replace $\delta_j(t)$ by
$$
\tilde\delta_j(t):=\frac{\xi_j(t)-\xi_j'(t)}{(1-e^{-2t})^{1/2}}
$$
with $\xi_j'(t)$ be an independent copy of $\xi_j(t)$. This is just by Jensen inequality and $\bE\xi_j'(t)=e^{-t}$. Random variables are symmetric and we use the following result. 

The following contraction principle is a classical result of Maurey and 
Pisier (see, e.g., \cite[Proposition 3.2]{P}). We spell out a version 
with explicit constants.

\begin{thm}
\label{mp}
Let $(X,\|\cdot\|)$ be a Banach space of cotype $r<\infty$,
let $\tilde\delta_1,\ldots,\tilde\delta_n$ be i.i.d.\ symmetric random variables,
and let $\varepsilon$ be uniformly distributed on $\{-1,1\}^n$. Then
for any $n\ge 1$, $\la_1,\ldots,\la_n\in X$, and $1\le q<\infty$, we have
$$
	\Bigg(
	\mathbf{E}\Bigg\|\sum_{j=1}^n\la_j\ \tilde\delta_j \Bigg\|^q\Bigg)^{1/q}
	\le
	L_{r,q}
	\int_0^\infty \mathbf{P}\{|\tilde\delta(t)_1|>s\}^{\frac{1}{\max(q,r)}}ds\,
	\Bigg(
	\mathbf{E}\Bigg\|\sum_{j=1}^n \la_j\varepsilon_j \Bigg\|^q
	\Bigg)^{1/q}
$$
with $L_{r,q}=L\,C_q(X)\max(1,(r/q)^{1/2})$,
where $L$ is a universal constant.
\end{thm}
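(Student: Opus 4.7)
The plan is to combine a symmetrization/layer-cake decomposition with Minkowski's integral inequality, together with a Bernoulli-selection estimate whose sharp dependence on $p$ is extracted via Hoffmann--J{\o}rgensen and the cotype hypothesis. First, by symmetry of $\tilde\delta_j$ one may write $\tilde\delta_j\stackrel{d}{=}\varepsilon_j|\tilde\delta_j|$ with an independent Rademacher sequence $(\varepsilon_j)$, and then use the layer-cake formula $|\tilde\delta_j|=\int_0^\infty\mathbf{1}_{\{|\tilde\delta_j|>s\}}\,ds$. Pulling the integral outside the norm and applying Minkowski's integral inequality in $L^q(X)$ yields
\[\Bigl(\mathbf{E}\Bigl\|\sum_j\la_j\tilde\delta_j\Bigr\|^q\Bigr)^{1/q}\le\int_0^\infty\Bigl(\mathbf{E}\Bigl\|\sum_j\la_j\varepsilon_j\chi_j(s)\Bigr\|^q\Bigr)^{1/q}\,ds,\]
where $\chi_j(s)=\mathbf{1}_{\{|\tilde\delta_j|>s\}}$ are i.i.d.\ Bernoulli$(p_s)$ with $p_s=\mathbf{P}\{|\tilde\delta_1|>s\}$, independent of $\varepsilon_j$. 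Once each integrand is bounded by $L_{r,q}\,p_s^{1/\max(q,r)}(\mathbf{E}\|\sum_j\la_j\varepsilon_j\|^q)^{1/q}$, integrating in $s$ yields the theorem.

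The heart of the proof is therefore the Bernoulli-selection inequality: for i.i.d.\ Bernoullis $\chi_j$ of parameter $p$, independent of the Rademachers $\varepsilon_j$,
\[\Bigl(\mathbf{E}\Bigl\|\sum_j\la_j\varepsilon_j\chi_j\Bigr\|^q\Bigr)^{1/q}\le L_{r,q}\,p^{1/\max(q,r)}\Bigl(\mathbf{E}\Bigl\|\sum_j\la_j\varepsilon_j\Bigr\|^q\Bigr)^{1/q}.\]
I would treat first the case $q\ge r$, where $\max(q,r)=q$, by applying the Hoffmann--J{\o}rgensen inequality to the symmetric summands $X_j=\la_j\varepsilon_j\chi_j$:
\[\mathbf{E}\Bigl\|\sum_j X_j\Bigr\|^q\le K_q\Bigl[\bigl(\mathbf{E}\bigl\|\sum_j X_j\bigr\|^r\bigr)^{q/r}+\sum_j\mathbf{E}\|X_j\|^q\Bigr].\]
The diagonal term equals $p\sum_j\|\la_j\|^q$, which cotype $q$ (inherited from cotype $r\le q$) bounds by $C_q(X)^q\,p\,\mathbf{E}\|\sum_j\la_j\varepsilon_j\|^q$. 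The leading term carries a factor $p^{q/r}\le p$ by the base case with exponent $r$ combined with Kahane's moment equivalence, and the two contributions combine to the desired $p^{1/q}=p^{1/\max(q,r)}$. For $q<r$ one reduces to the just-proved case by passing from $L^q$ to $L^r$ via H\"older on the left and back from $L^r$ to $L^q$ via Kahane on the right; the two Kahane constants assemble into the $\max(1,(r/q)^{1/2})$ factor appearing in $L_{r,q}$.

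The main obstacle is the Bernoulli-selection inequality itself, and specifically the sharp exponent $1/\max(q,r)$ of $p$. The naive contraction bound $\mathbf{E}_\varepsilon\|\sum_{j\in A}\la_j\varepsilon_j\|^q\le\mathbf{E}_\varepsilon\|\sum_j\la_j\varepsilon_j\|^q$, valid for any subset $A$ by a standard sign-flip symmetrization, produces only the useless $p^0=1$ after averaging over $\chi$. The extra power of $p$ must be harvested from the fluctuations of the random support $A$, and the cotype hypothesis is precisely what is needed to absorb the diagonal sum $\sum_j\|\la_j\|^q$ from Hoffmann--J{\o}rgensen back into the reference Rademacher moment. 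The split into the two regimes $q\ge r$ and $q<r$ is what pins down the correct exponent of $p$; sharpness of this choice is already visible in the scalar case $X=\mathbb R$ with $r=2$, where $q<2$ forces the exponent $1/2$ rather than $1/q$.
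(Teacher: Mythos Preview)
The paper does not prove this theorem at all: it is quoted as ``a classical result of Maurey and Pisier (see, e.g., \cite[Proposition 3.2]{P})'' and then immediately applied. So there is no paper proof to compare against, and your sketch is in fact the standard route to the Maurey--Pisier contraction principle: the layer-cake identity $|\tilde\delta_j|=\int_0^\infty\mathbf{1}_{\{|\tilde\delta_j|>s\}}\,ds$ together with Minkowski's integral inequality correctly reduces the problem to the Bernoulli-selection estimate, and both your Hoffmann--J{\o}rgensen bootstrap from $q=r$ to $q>r$ and your H\"older/Kahane reduction from $q<r$ to $q=r$ are sound.

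There is, however, a genuine gap: you never prove the base case $q=r$. Your Hoffmann--J{\o}rgensen inequality compares the $L^q$ norm of $\sum_j X_j$ to its $L^r$ norm; at $q=r$ this is vacuous, and the phrase ``by the base case with exponent $r$'' is then circular. One cannot repair this by taking $L^1$ in place of $L^r$ on the right of Hoffmann--J{\o}rgensen either: the $L^1$ term is controlled only by the contraction bound you yourself flag as ``useless $p^0=1$'', so the resulting estimate is $K_r\bigl[1+p^{1/r}C_r(X)\bigr]$ rather than $p^{1/r}$. The missing ingredient is Maurey's replication argument: for $p=1/N$, let $\sigma_j$ be i.i.d.\ uniform on $\{1,\dots,N\}$ and write
\[
\sum_j\la_j\varepsilon_j=\sum_{k=1}^N\Bigl(\sum_{j:\sigma_j=k}\la_j\varepsilon_j\Bigr).
\]
Conditionally on $\sigma$, the $N$ block sums are independent symmetric, so one may insert extra Rademacher signs and apply cotype $r$ to get
\[
\mathbf{E}\Bigl\|\sum_j\la_j\varepsilon_j\Bigr\|^r\ge C_r(X)^{-r}\,N\cdot\mathbf{E}\Bigl\|\sum_{j:\sigma_j=1}\la_j\varepsilon_j\Bigr\|^r,
\]
and the right-hand side is exactly $\mathbf{E}\|\sum_j\la_j\varepsilon_j\chi_j\|^r$ with $\chi_j$ Bernoulli$(1/N)$. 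This yields the factor $p^{1/r}$ at $q=r$; general $p\in(0,1)$ follows by monotonicity in $p$ (the left side increases with $p$ by contraction) and approximation. Once this step is in place, your bootstrap and reduction arguments complete the proof.
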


In the current situation $X=\bR$, so $r=2$ and $\max(q, 2)=q$. Notice also that
$$
	\int_0^\infty \mathbf{P}\{|\xi_j(t)-\xi_j'(t)|>s\}^{1/q}ds
	=
	2^{1-1/r} (1-e^{-2t})^{1/q}.
$$
Therefore,
$$
\Big(\bE_\xi  \Big|\sum_{j=1}^n \la_j  \delta_j (t) \Big|^q\Big)^{1/q}\le C (1-e^{-2t})^{1/q-1/2}\Big(\bE_\xi  \Big|\sum_{j=1}^n \la_j  \eps_j \Big|^q\Big)^{1/q} \le 
$$
$$
 \frac{C(q) }{(1-e^{-2t})^{1/2-1/q}}
$$
by Khintchine inequality and  by  $ \|\la\|_{\ell^2_n} =1$.

We will use this later: if $q>2$ then
\begin{equation}
\label{2qq}
\||\nabla e^{-t\Delta} f(\eps)|\|_p \le  \frac{C(q)e^{-t}}{(1-e^{-2t})^{1-1/q}} \|f\|_p=\frac{C(q)e^{-t}}{(1-e^{-2t})^{1/p} }\|f\|_p\,.
\end{equation}

Now let us use \eqref{1q2} for $p\ge 2$ and \eqref{2qq} for $1<p<2$ to finish the proof.
We can consider $x=e^{-t}$ and write those inequalities as the estimate of $p$-th norm of 
$$
F_f(x, \eps):= \sum_S x^{|S|} \hat  f(S)\eps_S, \quad\text{where}\,\, f= \sum_S \hat  f(S)\eps_S, \,\,0\le x\le 1\,.
$$
We get
\begin{equation}
\label{Fp2}
\||\nabla F(x, \cdot)|\|_p \le \frac{|x|}{(1-x^2)^{1/2}} \|f\|_p, \quad \forall x\in [-1, 1], p\ge 2\,
\end{equation}
and
\begin{equation}
\label{F1p}
\||\nabla F(x, \cdot)|\|_p \le \frac{|x|}{(1-x^2)^{1/p}} \|f\|_p, \quad \forall x\in [-1, 1], 1<p<2\,.
\end{equation}
We initially have this estimates only for $0\le x\le1$ but flipping $x\to -x$ is absorbed by flipping $\eps\to -\eps$.
By other methods these estimates were obtained also in \cite{EI1}, see (229) and (203) there.

\bigskip

Now we consider an auxiliary domain of the type considered in in \cite{EI1}.  Let us fix $\beta\in (1, 2)$ to be chosen later. Fix $r>1$.
Consider lens domain $\Om(r)=\{z: |z-i\sqrt{r^2-1}| \le r, \, |z+i\sqrt{r^2-1}|\le r\}$. Consider 
$$
\Om(r, \beta):= \Big(1-\frac1{d^\beta}\Big) \Om (r)\,.
$$

Let $G_{\beta, r}$ denote Green's function with pole at infinity of $\bC\setminus \Om(r, \beta)$. It is rather easy to see that
\begin{equation}
\label{Green}
G_{\beta, r}(1) \asymp d^{-\beta \frac{\pi}{2\pi - 2\arcsin\frac{2\sqrt{p-1}}{p}}},
\end{equation}
(notice that  $2\pi - 2\arcsin\frac{2\sqrt{p-1}}{p}$ is the exterior angle for $\Om(r, \beta)$ at corner points of the lens).

We choose $\beta$ in \eqref{Green} to have $G_{\beta, r}(1) \asymp \frac1d$, that is 
\begin{equation}
\label{beta} 
\beta= 2-\frac2{\pi} \arcsin \frac{2\sqrt{p-1}}{p}\,.
\end{equation}

\bigskip

\subsection{Complex variable}
\label{complex}
Now consider a new function in the complex domain:
$$
H(z):= \log \, \||\nabla F(z, \cdot)|\|_p\,.
$$
Notice that this function is subharmonic in the whole $\bC$. To see this one should  write the norm of the gradient as the
supremum over the dual space $L^q(\Om_n, \ell^2_n)$. 
Then we will get that $ \||\nabla F(z, \cdot)|\|_p$ is the supremum over the unit ball  of this dual space 
of the absolute values of linear combinations of $D_j F(z, \eps)$. 
Each such term is analytic and logarithm of absolute value 
of linear combination of such terms is subharmonic. 
The supremum can be interchanged with logarithm and we get that  $H(z)$ is subharmonic.

Let us collect properties of $H$. As $f$ is a polynomial of degree $d$, we get that the growth of $H$ at infinity is majorized by $d\log |z|$. 

In the other hand, we can always think that $\|f\|_p=1$, and then  we just saw that
on the interval $[-1+\frac1{d^\beta}, 1-\frac1{d^\beta}]$ function $H(x)$ has the estimate:
$$
F(x)/d^{\beta/2} \le 1, \quad\text{if}\,\, p\ge 2;\quad  F(x) /Cd^{\beta/p} \le 1,\quad\text{if}\,\, 1<p<2\,.
$$

Then, say, $H(z) -\frac
\beta{2} \log d$ is non positive on $[-1+\frac1{d^\beta}, 1-\frac1{d^\beta}]$ and is of order $d \log|z|$ at infinity. 

\bigskip

But we can say much more by Weissler \cite{We} and Ivanisvili--Nazarov \cite{IN}. It tuns out that then  $H(z) -\frac
\beta{2} \log d$ is non positive on $\bC\setminus G_{\beta, r}$. These are the complex hypercontractivity results.

\bigskip

Hence,  using Green's function $G_{\beta, r}$ of $\bC\setminus G_{\beta, r}$ with pole at infinity
 we get that
$$
H(z) -\frac
\beta{2} \log d \le dG_{\beta,r}(z)
$$
uniformly in $\bC\setminus G_{\beta, r}$.
Hence,
$$
\frac{\||\nabla F(z, \cdot)|\|_p}{d^{\beta/2}} \le e^{dG_{\beta, r}(z)}\,.
$$

We are interested in this inequality for just one particular $z=1$. 
Now we use \eqref{beta} to have $e^{dG_{\beta, r}(1)}\asymp 1$.


Hence we proved that  for $p\ge 2$
$$
\frac{\||\nabla F(1, \cdot)|\|_p}{d^{\beta/2}} \le C\,.
$$

Exactly the same reasoning shows that for
 $1<p<2$
$$
\frac{\||\nabla F(1, \cdot)|\|_p}{d^{\beta/p}} \le C\,.
$$
Theorem \ref{RXf} is completely proved just by plugging formula \eqref{beta} for $\beta$.
\end{proof}

\begin{proof}
The proof of theorem \ref{Xf} follows the same lines, but we need to use Theorem \ref{mp} for Banach spaces $X^*$. This is where  we use that
if $X^*$ is of type $2$ then is of finite co-type by K\"onig--Tzafriri theorem 7.1.14 in \cite{HVNVW2}.  Type $2$  is needed to conclude (using Khintchine--Kahane's inequality, see e.g.  \cite{HVNVW2}):
$$
\bE_\eps\Bigg\| \sum\eps_j \la_j\Bigg\|_p \le C_p\bE_\eps\Bigg\| \sum\eps_j \la_j\Bigg\|_2  \le C\big(\sum\|\la_j\|_{X^*}^2)^{1/2} \le C\,.
$$

\end{proof}

\subsection{Comparison of $\Delta^{1/2}\cdot$ and $|\nabla\cdot|_X$}
\label{co}

The reader can notice that in this paper we are mostly interested in bounding the expressions of the type $|\nabla f|_X$. It would be interesting to 
get from this the estimates of the  expressions of the type $\Delta^{1/2} f$.  But for Banach space valued functions it is mostly an open task.

For Banach space valued functions $f:\Om_n\to X$ it is not quite clear who majorized whom if we deal with $\|\Delta^{1/2} f\|_{L^p(X)}$ and
$\||\nabla f|_X\|_p$.

We would like to decide for exactly what class of Banach spaces $X$
$$
\|\Delta^{1/2} f\|_{L^p(X)} \le C_p \||\nabla f|_X\|_p\,.
$$
We think that this is the class of spaces of finite co-type. In fact, this is one way to express the boundedness {\it from below} of Riesz transform of Hamming cube 
in spaces $L^p(\Om_n, X)$, $1<p<\infty$. For $X=\bR$ this boundedness from below is always true, see \cite{BELP}.

On the other hand, the converse inequality, that is the boundedness of Riesz transform of Hamming cube {\it from above},
$$
  \||\nabla f|_X\|_p \le C_p \|\Delta^{1/2} f\|_{L^p(X)}
$$
does not have  a reasonably wide class of Banach spaces for which it holds. 
For $p \ge 2$ and $X=\bR$ this boundedness from above holds, but for $1<p<2$ it fails, see \cite{BELP}.

It can fail for very nice UMD space $X$ even for $p>2$.


\section{Type $2$ Banach spaces in Theorem \ref{Xf}}
\label{2sm}

By $L^p(X)$ we always mean $L^p(\Om_n; X)$, where $\Om_n$ is Hamming cube.
Let $1/q+1/p=1$.
Let $\cP(d, X)$ be the collection of polynomials with coefficients in Banach space $X$ and of degree at most $d$.
We prove that Theorem \ref{Xf} can  be somewhat strengthened in the sense of the  power of $d$.
Next theorem deals with $X$ such that $X^*$ is of type $2$. In particular, $X^*$ and $X$ are $K$-convex. Let $\pi\al$ denote the angle from Theorem \ref{pisier}.
\begin{thm}
\label{main}
If $f\in \cP(d, X)$ and $X^*$ is of type $2$.
Then $\||\nabla f|_X\|_{L^p(X)}\le C d^{\frac{2-\al}{p}} \| f\|_{L^p(X)}$ for $1<p<2$, and  $\||\nabla f|_X\|_{L^p(X)}\le C d^{1-\frac{\al}2} \| f\|_{L^p(X)}$ for $p\ge 2$.
\end{thm}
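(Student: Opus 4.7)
My plan is to transplant the complex-variable strategy of Theorem \ref{RXf} to the $X$-valued setting, with two adaptations tailored to the Pisier angle $\pi\alpha$: in the real-variable step, replace the scalar orthonormality by Theorem \ref{mp} for $X^*$-valued vectors together with type $2$; in the complex-variable step, replace the scalar hypercontractivity lens by a shrunk version of Pisier's region $O_\alpha$ from the proof of Theorem \ref{KXal}.

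First, I would compute the real pointwise bound for $F(x,\eps) := \sum_S x^{|S|}\hat f(S)\eps^S = e^{-t\Delta}f(\eps)$, $x = e^{-t}\in(-1,1)$. Starting from formula \eqref{formula}, dualize $|\nabla g(\eps)|_X$ by unit vectors $\la\in (X^*)^n$ with $\sum\|\la_j\|_{X^*}^2\le 1$, symmetrize $\delta_j(t)$ to $\tilde\delta_j(t)$ by Jensen, and apply Theorem \ref{mp} in $X^*$. Since $X^*$ of type $2$ has finite cotype by K\"onig--Tzafriri, Theorem \ref{mp} applies; and type $2$ bounds the resulting Rademacher sum by $T_2(X^*)(\sum\|\la_j\|_{X^*}^2)^{1/2}\le T_2(X^*)$ via Khintchine--Kahane. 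This yields $\||\nabla F(x,\cdot)|_X\|_{L^p(X)}\le C|x|(1-x^2)^{-1/2}\|f\|_{L^p(X)}$ for $p\ge 2$ and $C|x|(1-x^2)^{-1/p}\|f\|_{L^p(X)}$ for $1<p<2$, the $X$-valued analogs of \eqref{Fp2} and \eqref{F1p}.

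Second, set $H(z):=\log\||\nabla F(z,\cdot)|_X\|_{L^p(X)}$ as in Section \ref{complex}. The dualization there makes $H$ subharmonic on $\bC$ regardless of the scalar field, and $F$ being polynomial of degree $\le d$ in $z$ forces $H(z)\le d\log|z|+O(1)$ at infinity. As obstacle I would take $\Omega_\alpha(d):=(1-d^{-\beta})O_\alpha$ with $\beta:=2-\alpha$, whose corners sit at $\pm(1-d^{-\beta})$ with interior angle $\pi\alpha$, so the Green's function $G$ of $\bC\setminus\overline{\Omega_\alpha(d)}$ with pole at infinity satisfies $G(1)\asymp (d^{-\beta})^{\pi/(2\pi-\pi\alpha)}=d^{-1}$, the needed calibration. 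Granted the boundary bound $H(z)\le(\beta/2)\log d+O(1)$ on $\overline{\Omega_\alpha(d)}$ for $p\ge 2$ (respectively $(\beta/p)\log d+O(1)$ for $1<p<2$), Phragm\'en--Lindel\"of with $G$ gives $H(1)\le(\beta/2)\log d+dG(1)\le(\beta/2)\log d+O(1)$, i.e.\ $\||\nabla f|_X\|_{L^p(X)}\le Cd^{1-\alpha/2}\|f\|_{L^p(X)}$ (and $Cd^{(2-\alpha)/p}$ for $1<p<2$).

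The real pointwise estimate already handles the diameter of $\Omega_\alpha(d)$. To propagate it to the full complex lens, I would invoke Pisier's theorem applied to $\ell^2_n(X)$-valued functions: since type $2$ of $X^*$ is inherited (uniformly in $n$) by $\ell^2_n(X^*)=\ell^2_n(X)^*$, $\ell^2_n(X)$ is $K$-convex with constants bounded independently of $n$, and $e^{-z\Delta}$ acts uniformly boundedly on $L^p(\ell^2_n(X))$ for $z\in A_\alpha$. Combined with the componentwise identity $D_j F(z,\cdot)=z\cdot(e^{-z\Delta}D_j f)(\cdot)$, this bounds the $\ell^2_n(X)$-valued function $(D_j F(z,\cdot))_j$ uniformly on $O_\alpha$ and thence $\||\nabla F(z,\cdot)|_X\|_{L^p(X)}$, which together with subharmonicity of $H$ and the real estimate delivers the required boundary bound on $\overline{\Omega_\alpha(d)}$. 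The main obstacle is precisely this propagation: showing that Pisier's angle for $\ell^2_n(X)$ is at least $\alpha$ uniformly in $n$ is a quantitative $K$-convexity statement that needs care; alternatively, one could try to adapt the Weissler / Ivanisvili--Nazarov complex hypercontractivity to the $X$-valued setting directly using the Pisier region, which seems more delicate still.
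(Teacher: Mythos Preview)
Your overall architecture is exactly the paper's: a real-variable heat estimate for $\||\nabla F(x,\cdot)|_X\|_{L^p}$ on $(-1,1)$, then a subharmonic Phragm\'en--Lindel\"of step against the shrunk Pisier region $(1-d^{-\beta})O_\alpha$ with $\beta=2-\alpha$, using Pisier's theorem on $L^p(\Om_n;\ell^2(X))$ to propagate the interval bound to the complex domain. The complex-variable half of your plan is fine, and your worry about the angle for $\ell^2_n(X)$ is not a real obstacle: the $K$-convexity constant of $\ell^2(X)$ equals that of $X$ (the Rademacher projection acts componentwise), so the Pisier angle is the same.

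The gap is in the real-variable step for $1<p<2$ (i.e.\ $q>2$). Theorem~\ref{mp} applied in $X^*$ introduces the factor
\[
\int_0^\infty \bP\{|\tilde\delta_1(t)|>s\}^{1/\max(q,r)}\,ds\;\asymp\;(1-e^{-2t})^{\frac{1}{\max(q,r)}-\frac12},
\]
where $r$ is the \emph{cotype} of $X^*$. Combined with the prefactor $(1-e^{-2t})^{-1/2}$ this yields the exponent $-(1-1/\max(q,r))$, which equals $-1/p$ only when $r\le q$. Type $2$ alone does not force cotype $2$ (e.g.\ $X^*=L^s$ with $s>2$ has type $2$ and cotype $s$), so for $p\in(r',2)$ your bound is only $(1-x^2)^{-(1-1/r)}$, strictly weaker than the claimed $(1-x^2)^{-1/p}$. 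Fed into the Green-function step this gives $d^{(2-\alpha)(1-1/r)}$, not $d^{(2-\alpha)/p}$. This cotype dependence is precisely what separates Theorem~\ref{Xf} (where the paper \emph{does} use Theorem~\ref{mp} and records the $\max(q,r)$) from Theorem~\ref{main}.

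The paper removes the cotype by replacing Theorem~\ref{mp} with a direct Rosenthal-type moment computation: after symmetrization and Rademacher randomization, Kahane--Khintchine plus type $2$ of $X^*$ reduce matters to estimating $\bE_{\xi,\xi'}\bigl(\sum_j |\delta_j'(t)|^2\|\la_j\|_{X^*}^2\bigr)^{q/2}$ for \emph{scalar} independent summands. An elementary inequality $(a+b)^Q\le 6a^Q+Q^Qb^Q$ (Lemma~\ref{aplusb}) plus independence then delivers the sharp $(1-e^{-2t})^{-(1/2-1/q)}$, with no cotype entering. For $p\ge 2$ your conclusion is correct, but even there you should bypass Theorem~\ref{mp} and argue directly via Jensen to the second moment and type $2$, as the paper does in \eqref{qle2}; the Maurey--Pisier route would again pick up an unwanted $r$.
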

\begin{proof}
We recall the formula from \cite{IVHV}:
\begin{equation}
\label{formula}
D_j e^{-t\Delta} f (\eps)= \frac{e^{-t}}{(1-e^{-2t})^{1/2}} \bE_\xi\Big( \delta_j (t) f(\eps_1\cdot\xi_1(t), \dots, \eps_n\cdot\xi_n(t))\Big)\,.
\end{equation}
Here 
$$
\delta_j(t) := \frac{\xi_j(t)-e^{-t}}{(1-e^{-2t})^{1/2}},
$$
where $\xi_j(t)$ are independent random variables having values $\pm 1$ with probabilities $\frac{1\pm e^{-t}}{2}$.

The symmetric counterpart is
$$
\delta_j'(t) := \frac{\xi_j(t)-\xi_j'(t)}{(1-e^{-2t})^{1/2}},
$$
where vector $\{\xi_j'(t)\}$ is independent copy of $\{\xi_j(t)\}$.

We use the notation $\ell^2_n$ for $\ell^2_n(X^*)$ with the norm $(\sum_{j=1}^n \|\la_j||_{X^*}^2)^{1/2}$.
From \eqref{formula} for every $\eps \in \Om_n$ we can write
$$
|\nabla e^{-t\Delta} f(\eps)| =\frac{e^{-t}}{(1-e^{-2t})^{1/2}} \max_{\la: \|\la\|_{\ell^2_n} =1} \Big(\bE_\xi  \Big|\sum_{j=1}^n \delta_j(t) \langle \la_j, f(\eps\cdot\xi)\rangle\Big|\Big)\,.
$$
Hence,
$$
|\nabla e^{-t\Delta} f(\eps)|\le  \frac{e^{-t}}{(1-e^{-2t})^{1/2}}\max_{\la: \|\la\|_{\ell^2_n} =1}\bE_\xi  \Big|\sum_{j=1}^n   \delta_j (t) \langle \la_j, f(\eps\cdot\xi(t))\rangle\Big| =
$$
$$
 \frac{e^{-t}}{(1-e^{-2t})^{1/2}} \max_{\la: \|\la\|_{\ell^2_n} =1}\Big(\bE_\xi  \Big|\langle\sum_{j=1}^n \delta_j(t)  \la_j, f(\eps\cdot\xi)\rangle\Big|\Big)\le
 $$
 $$
 \frac{e^{-t}}{(1-e^{-2t})^{1/2}} \max_{\la: \|\la\|_{\ell^2_n} =1}\bE_\xi  \Big(\Big\|\sum_{j=1}^n \delta_j(t)  \la_j\Big\|_{X^*} \|f(\eps\cdot \xi)\|_X\Big)\le
 $$
 $$
 \frac{e^{-t}}{(1-e^{-2t})^{1/2}} \max_{\la: \|\la\|_{\ell^2_n} =1}\Big( \bE_\xi  \Big\|\sum_{j=1}^n \la_j  \delta_j (t) \Big\|^q \Big)^{1/q}\, \Big(\bE_\xi \|f(\eps\cdot\xi)\|^p\Big)^{1/p}\,.
 $$
\bigskip
We wish to prove that if $q>2$ then
\begin{equation}
\label{2q}
\||\nabla e^{-t\Delta} f(\eps)|\|_p \le  \frac{C(q)e^{-t}}{(1-e^{-2t})^{1/p}} \|f\|_{L^p(X)}\,,
\end{equation}
and if
$1\le q\le 2$ then
\begin{equation}
\label{q2}
\||\nabla e^{-t\Delta} f(\eps)|\|_p \le  \frac{C(q)e^{-t}}{(1-e^{-2t})^{1/2}} \|f\|_{L^p(X)}\,,
\end{equation}

For that let us work now with the term $\bE_\xi  \Big\|\sum_{j=1}^n \la_j  \delta_j (t) \Big\|_{X^*}^q$ for a fixed $\{\la_j\}\in \ell^2_n$ of norm $1$.



\bigskip

$$
B^q:=\bE_\xi  \Big\|\sum_{j=1}^n \la_j  \delta_j (t) \Big\|^q \le   \bE_{\xi,\xi'} \Big\|  \sum_{j=1}^n \la_j  \delta_j' (t) \Big\|^q =
$$
$$
 \bE_{\xi,\xi'}\bE_{r} \Big\|  \sum_{j=1}^n r_i\la_j  \delta_j' (t) \Big\|^q,
 $$
 where $r_i$ are independent Rademacher random variables.
\bigskip

The next lemma was provided by A. Borichev.
\begin{lem}
\label{aplusb}
Let $a, b\ge 0$ and $Q\ge 2$ be a large number. Then
$$
(a+b)^Q \le 6 a^Q + Q^Q b^Q\,.
$$
\end{lem} 
\begin{proof}
We need to show that  for all positive $t$, $(t+1)^Q \le 6 t^Q +Q^Q$. If $t \le Q-1$ this is immediate.
If $t\ge Q-1\ge 1$, we write 
$$
(t+1)^Q = t^Q \Big(\frac{t+1}{t}\Big)^Q  \le  t^Q \Big(\frac{t+1}{t}\Big)^{t+1} \le 2t^Q \Big(1+\frac1t\Big)^{t} \le 2e t^Q\,.
$$
\end{proof}

We continue to estimate $B^q$:
$$
B^q \le   \bE_{\xi,\xi'}\bE_{r} \Big\|  \sum_{j=1}^n r_i\la_j  \delta_j' (t) \Big\|_{X^*}^q \le C_q \bE_{\xi,\xi'}\Big(\bE_{r} \Big\|  \sum_{j=1}^n r_i\la_j  \delta_j' (t) \Big\|_{X^*}^2\Big)^{q/2} \le
$$
$$
C_q D^{q/2} \bE_{\xi,\xi'} \Big(\sum_{j=1}^n |\delta_j'(t)|^2 \|\la_j\|_{X^*}^2\Big)^{q/2}\,.
$$
In the last inequality we used that $X^*$ is of type $2$. The penultimate inequality is Kahane--Khintchine's inequality, see \cite{HVNVW2}.

Notice that if $1\le q\le 2$ then the above inequality
gives
$$
B^q \le C_q D^{q/2} \Big(\bE_{\xi,\xi'} \sum_{j=1}^n |\delta_j'(t)|^2 \|\la_j\|_{X^*}^2\Big)^{q/2}
$$
Hence,
\begin{equation}
\label{qle2}
1\le q\le 2\Rightarrow B \le C_q D^{q/2} \Big( \sum_{j=1}^n\|\la_j\|_{X^*}^2\Big)^{1/2} \le   C_q D^{q/2} \,.
\end{equation}

The estimate in case $2<q<\infty$ is much more interesting.

\bigskip

Now we will continue by thinking that $q$ is an  even integer, $q=2k$ (it is not important, just convenient). Let us now  estimate
\begin{equation}
\label{E}
E:=\bE_{\xi, \xi'}  \Big(\sum_{j=1}^n |\delta_j'(t)|^2 \|\la_j\|^2 \Big)^{k}
\end{equation}
We  denote
\begin{equation}
\label{fj}
f_j := |\delta_j'(t)|^2\|\la_j\|^2\,.
\end{equation}
Below we use Lemma \ref{aplusb} with $Q:=q/2-1=k-1$:
\begin{eqnarray*}
&\bE_{\xi, \xi'} (\sum_{j=1}^n f_j)^k =\bE_{\xi, \xi'} \sum_{i=1}^n(\sum_{j=1}^n f_j)^{k-1}  f_i=
\\
&\bE_{\xi, \xi'} \sum_{i=1}^n( (f_1+\dots f_{i-1}+f_{i+1}+\dots f_n) + f_i)^{k-1}  f_i \le ^{Lemma \,\ref{aplusb}}
\\
& (k-1)^{k-1} \bE_{\xi, \xi'}  \sum_{i=1}^n f_i^k +6  \bE_{\xi, \xi'}  \sum_{i=1}^n (f_1+\dots f_{i-1}+f_{i+1}+\dots f_n)^{k-1} \bE_{\xi, \xi'} f_i \le
\\
&(k-1)^{k-1}  \bE_{\xi, \xi'} \sum_{i=1}^n f_i^k +6 \bE_{\xi, \xi'}  (\sum_{j=1}^n   f_j)^{k-1}  \bE_{\xi, \xi'}  (\sum_{j=1}^n   f_j)\le
\\
&(k-1)^{k-1}  \bE_{\xi, \xi'} \sum_{i=1}^n f_i^k  +6(k-2)^{k-2}\bE_{\xi, \xi'} \sum_{i=1}^n f_i^{k-1} \bE_{\xi, \xi'}  (\sum_{j=1}^n   f_j) +
\\
& 6^2 \bE_{\xi, \xi'}  (\sum_{j=1}^n   f_j)^{k-2}  \big[\bE_{\xi, \xi'}  (\sum_{j=1}^n   f_j)\big]^2 \le
\\
&(k-1)^{k-1}  \bE_{\xi, \xi'} \sum_{i=1}^n f_i^k  +\dots + 6^\ell (k-\ell)^{k-\ell} \bE_{\xi, \xi'}  (\sum_{j=1}^n   f_j^{k-\ell}  )\big[\bE_{\xi, \xi'}  (\sum_{j=1}^n   f_j)\big]^{\ell}+\dots
\\
&+6^{k-1} \big[\bE_{\xi, \xi'}  (\sum_{j=1}^n   f_j)\big]^{k}\,.
\end{eqnarray*}
We used the fact that $\delta_j'(t)$, $j=1,\dots, n$, are independent exactly as this has  been done in Rosenthal's \cite{HR}.

\medskip

Now coming back to our notation \eqref{fj} we see that as 
\begin{equation}
\label{moments}
\bE|\delta_j'(t)|^2  =2,\,\quad \bE |\delta_j'(t)|^{m} \le \frac{2^{m-1} }{\sqrt{1-e^{-2t}}^{m-2}}\,.
\end{equation}
$$
\bE_{\xi, \xi'}  (\sum_{j=1}^n   f_j )\le 2 \|\{\la_j\}\|_{\ell^2_n}^2,
$$
$$
\bE_{\xi, \xi'}  (\sum_{j=1}^n   f_j^{k-\ell} )\le  \frac{ 2^{2k-2\ell}}{\sqrt{1-e^{-2t}}^{2k-2\ell-2}}\sum_{j=1}^n \|\la_j\|^{2k-2\ell}\,.
$$

\bigskip

Therefore, we can estimate $E$ from \eqref{E} as follows:
$$
E \le 24^k \sum_{\ell=0}^{k-2} \Big[  \frac{(k-\ell)^{k-\ell}}{\sqrt{1-e^{-2t}}^{2k-2\ell-2}}\sum_{j=1}^n \|\la_j\|^{2k-2\ell} \|\{\la_j\}\|^{2\ell}\Big] +24^k \|\{\la_j\}\|^{2k}\,.
$$

This obviously gives
$$
E \le  2(24)^k \sum_{\ell=0}^{k-2} (k-\ell)^{k-\ell}  \Big(\frac{1}{\sqrt{1-e^{-2t}}}\Big)^{2k-2\ell-2}\Big(\sum_{j=1}^n \|\la_j\|^{2}\Big)^{k-\ell} \|\{\la_j\}\|^{2k-2}\,+
$$
$$
24^k \|\{\la_j\}\|^{2k}\,.
$$
And so,
$$
E\le C'(q) \|\{\la_j\}\|^q  \sum_{\ell=0}^{k-2}  (k-\ell)^{k-\ell} \Big(\frac{1}{\sqrt{1-e^{-2t}}}\Big)^{2k-2\ell-2}
$$


Then
\begin{equation}
\label{qge2}
B \le \frac{C(q)}{(1-e^{-2t})^{\frac12-\frac1q}}  \|\{\la_j\}\|_{\ell^2_n(X^*)} =  \frac{C(q)}{(1-e^{-2t})^{\frac12-\frac1q}}  \,.
\end{equation}


 \bigskip

Now let us use  \eqref{2q} for $1<p<2$ and \eqref{q2} for $p\ge 2$ to finish the proof.
We can consider $x=e^{-t}$ and write those inequalities as the estimate of $p$-th norm of 
$$
F_f(x, \eps):= \sum_S x^{|S|} \hat  f(S)\eps_S, \quad\text{where}\,\, f= \sum_S  \hat  f(S)\eps_S, \,\,0\le x\le 1\,.
$$
We get from \eqref{qge2} and \eqref{qle2} correspondingly that
\begin{equation}
\label{Banachp2}
\||\nabla F(x, \cdot)|\|_{p} \le \frac{|x|}{(1-x^2)^{1/p}} \|f\|_{L^p(X)} , \quad \forall x\in [-1, 1], 1<p<2\,.
\end{equation}
\begin{equation}
\label{Banach2p}
\||\nabla F(x, \cdot)|\|_{p}  \le \frac{|x|}{(1-x^2)^{1/2}} \|f\|_{L^p(X)} , \quad \forall x\in [-1, 1], p\ge 2\,.
\end{equation}
We initially have this estimates only for $0\le x\le1$ but flipping $x\to -x$ is absorbed by flipping $\eps\to -\eps$.

Now consider a new function in the complex domain:
$$
H(z):= \log \, \||\nabla F(z, \cdot)|_X\|_p\,.
$$
We repeat verbatim the reasoning of Section \ref{complex} but instead of domain $\Om\setminus \Om(\beta, r)$ and its Green's function, we consider domain $\bC\setminus [-1+\frac1{d^2}, 1-\frac1{d^2}]$, whose Green's function $G_d$ satisfies
$$
G_d(1)\asymp \frac1d\,.
$$
This proves
$$
\frac{\||\nabla F(1, \cdot)|_X\|_p}{d^{\max (2/p, 1)}} \le C\,.
$$

\bigskip

But \eqref{qge2} and \eqref{qle2} can be used more efficiently if we use Pisier's Theorem \ref{pisier} again. In fact, it can be used. As $X^*$ has type $2$, it is $K$ convex.  Then $X$ is $K$-convex.
Let us fix $\beta$ to be chosen later and consider domain
$$
O_{\beta, \al}:=
\Big(1-\frac1{d^\beta}\Big)\, O_\al,
$$
where $O_\al$ was introduced in the previous Section.

\medskip

As $X$ is $K$-concave, so is $\ell^2(X)$.
Consequently \eqref{qge2} and \eqref{qle2} and Pisier's Theorem \ref{pisier} applied to $L^p(\Om_n, \ell^2(X))$ show that

$$
\||\nabla F(z, \cdot)|_X\| \le Cd^{\beta/p}, \,1<p<2, \quad \le C d^{\beta/2},\, p\ge 2, \quad z\in O_{\beta, \al}\,.
$$
Let $G_{\beta, \al}$ denote Green's function of $\bC\setminus \bar{O}_{\beta, \al}$.

We repeat verbatim the reasoning of Section \ref{complex} but instead of domain $\Om\setminus \Om(\beta, r)$ and its Green's function, we consider domain $\bC\setminus \bar{O}_{\beta, \al}$ whose Green's function $G_{\beta, \al}$ satisfies
$$
G_{\beta, \al}(1)\asymp  \Big(\frac1{d^\beta}\Big)^{\frac{\pi}{2\pi-\pi\al}}\asymp \frac1d\,,
$$
if 
$$
\beta= 2-\al\,.
$$
This proves
$$
\frac{\||\nabla F(1, \cdot)|_X\|_p}{d^{\max (\frac{2-\al}{p}, \frac{2-\al}{2})}} \le C\,.
$$

Theorem \ref{main} is proved.
\end{proof}

\begin{rem}
\label{DN}
We already mentioned in Section \ref{co} that
the boundedness of Riesz transform of Hamming cube {\it from above},
$$
  \||\nabla f|_X\|_p \le C_p \|\Delta^{1/2} f\|_{L^p(X)}
$$
does not have  a reasonably wide class of Banach spaces for which it holds. 
For $p \ge 2$ and $X=\bR$ this boundedness from above holds, but for $1<p<2$ it fails, see \cite{BELP}.
It can fail for very nice UMD space $X$ even for $p>2$.
Therefore, Theorem \ref{KX} or other Bernstein--Markov type
estimates of $\Delta^{1/2}f$ in \cite{EI1}   for $X$-valued polynomials $f$ with $X$ being $K$-convex
cannot help to prove the estimates of the type of Theorem \ref{main} or Theorem \ref{Xf}.
\end{rem}


\section{Non-commutative random variables  and Bernstein--Markov inequalities on Hamming cube}
\label{FLP}

We wish to demonstrate how the technique of non-commutative random variables can be used to
prove certain Bernstein--Markov inequalities on Hamming cube. The  estimates below are not as good as in the previous Section, and what follows
serves only illustrative purpose of showing a beautiful approach. 

To the best of our knowledge this approach was introduced by Francoise Lust-Piquard in \cite{FLP}, \cite{BELP}. All results of those papers are commutative, all methods are non-commutative. And even though many non-commutative proofs of those papers are by now made commutative (see, e.g. \cite{ILVHV}), still some non-commutative proofs did not get a commutative analogs up to now.

The non-commutative proof of a certain Bernstein--Markov inequality below  is given not because of its efficiency, but because of its beauty.

\bigskip

We will prove now that for $p\ge 2$
\begin{equation}
\label{ncBM}
f:\Om_n\to \bR, \deg f\le d\Rightarrow \||\nabla f|\|_p \le C_p\,d \|f\|_p,
\end{equation}
which is worse than Theorem \ref{RXf}.

Let
$$
Q=\begin{bmatrix}
0 & 1\\
1 & 0
\end{bmatrix},\,\, P = \begin{bmatrix}
0 & i\\
-i & 0
\end{bmatrix},\,\, U= i QP,
$$
They have anti-commutative relationship
\begin{equation}
\label{antiC}
QP=-PQ\,.
\end{equation}
Let $Q_j= I\otimes \dots Q\otimes I\dots\otimes I $, $P_j= I\otimes \dots P\otimes I\dots\otimes I $, on $j$-th place.
These are independent non-commutative random variables in the sense of $\text{tr}$ = sum of diagonal elements divided by $2^n$.

Put $Q_A = \Pi_{i\in A} Q_i$, $P_A = \Pi_{i\in A} P_i$

Now one considers algebra generated by $Q_j, P_j$ (this is algebra of all matrices $\mathcal{M}_{2^n}$). 
We have a projection $\mathcal{P}$ from multi-linear polynomials in $P_j, Q_j$ (notice $P^2=I, Q^2=I$) that kills everything except terms having only $Q's$.

Small (really easy) algebra shows (see \cite{BELP}) that $\mathcal{P}$ can be written as $\rho\, Diag \,\rho^*$, where $\rho$ is a  conjugation by a unitary operator, and $Diag$, is an operator on matrices that just kills all matrix elements except the diagonal. This $Diag$ is obviously the contraction on Schatten-von Neumann class $S_p$ for any $p\in [1, \infty]$ (obvious for Hilbert--Schmidt, $p=2$,  class and for bounded operators--so interpolation does that).

$$
\mathcal{R}(\theta)Q_A = \Pi_{j\in A} ( Q_j\cos\theta+ P_j\sin\theta)\,,\,\, \mathcal{R}(\theta)P_A = \Pi_{j\in A} ( P_j\cos\theta - Q_j\sin\theta)\,.
$$
One can easily check that the action of $\mathcal{R}(\theta)$ is $ R(\theta)^* T R(\theta)$ where $R(\theta)$ is a unitary matrix which is $n$-fold tensor product of
$$
\rho_\theta=\begin{bmatrix}
1 & 0\\
0 & e^{i\theta}
\end{bmatrix}
$$
Extend it by linearity onto the whole algebra $\mathcal{M}_{2^n}$. Then it is obvious that automorphism $\mathcal{R}(\theta)$ preserves all Schatten--von Neumann $S_p$ norms.

For any $f=\sum_{A\subset [n]} \hat f(A) \eps^A$, the reasoning of \cite{BELP} dictates to assign a non-commutative object, a matrix from $\mathcal{M}_{2^n}$ given by
 $$
 T_f =\sum_{A\subset [n]} \hat f(A) Q_A\,.
 $$
 Such matrices form commutative sub-algebra $M_{2^n}\subset \mathcal{M}_{2^n}$. Operators $\partial_j, D_j$ can be considered on $M_{2^n}$, acting in a canonical way. For example,
 $$
\partial_i Q_A =\begin{cases} Q_{A\setminus i},\,\,\text{if}\,\, i\in A;
\\
0, \, \,\text{if} \,\, i\notin A\,.\end{cases} 
 $$
 And $D_i:= \eps_i \pd_i$.

 \medskip


Consider now a matrix valued function 
$$
A_f(\theta)= \cR(\theta)T_f\,.
$$
It is a trigonometric polynomial of degree at most $d$ with matrix coefficients. Bernstein--Markov inequality (its proof) works for such matrix valued polynomials in exactly the same way as for scalar polynomials. The easiest way to see that is to prove Bernstein--Markov estimate by convolution with Fejer kernels. Then we get
$$
\Big\|\frac{d}{d \theta} A_f(\theta) \Big\|_{S_p} \le 2d \,\Big\|A_f(\theta) \Big\|_{S_p}, \quad 1\le p\le \infty.
$$

\bigskip

 On the other hand we can calculate easily $\frac{d}{d \theta} \cR(\theta) (Q_A)=$
 $$
 -\sum_{j\in A}\Pi_{i\in A, i<j} (\cos \theta Q_i+\sin\theta P_i) ((-\sin\theta Q_j+\cos\theta P_j)\Pi_{i\in A, i>j} (\cos \theta Q_i+\sin\theta P_i)\,.
 $$
 By commutativity relations between $P_j, Q_i$, we observe that this is nothing else but $\cR(\theta)\big(P_j \pd_j Q_A\big)$.
 Hence
 \begin{equation}
 \label{formulaNC}
 \frac{d}{d \theta} A_f(\theta) = \frac{d}{d \theta} \cR(\theta) T_f = -\cR(\theta)\Big( \sum_{j=1}^n P_j\pd_j T_f\Big)\,.
 \end{equation}
 Therefore,
 $$
 \|\cR(\theta)\Big( \sum_{j=1}^n P_j\pd_j T_f\Big)\|_{S_p} \le 2d\, \|\cR(\theta) T_f\|_{S_p}\,.
 $$
 Transformation $\cR_\theta$ preserves $S_p$ norms (see above), and so
 \begin{equation}
 \label{Tf}
\| \sum_{j=1}^n P_j\pd_j T_f\|_{S_p} \le 2d\, \|T_f\|_{S_p}\,.
\end{equation}
Let $\eps^{(k)}_j= -1$ if $j=k$ and $=1$ otherwise.
Following  \cite{BELP} we see  that $\| \sum_{j=1}^n P_j\pd_j T_f\|_{S_p} = \| \sum_{j=1}^n \eps^{(k)}_jP_j\pd_j T_f\|_{S_p} $. This is because
$$
Q_k \big( \sum_{j=1}^n P_j\pd_j \big) Q_k = \sum_{j=1}^n \eps^{(k)}_jP_j\pd_j 
$$
by anti-commutative relation $PQ=-QP$. Hence, for any sequence of signs 
$$
\| \sum_{j=1}^n P_j\pd_j T_f\|_{S_p} = \| \sum_{j=1}^n \eps_jP_j\pd_j T_f\|_{S_p}\,.
$$
Now one should use a non-commutative Khintchine inequality  of Lust-Piquard and Pisier \cite{FLPGP} and $2\le p\le \infty$:
$$
 \bE_\eps\| \sum_{j=1}^n \eps_jP_j\pd_j T_f\|_{S_p} \asymp_p \|\big( \sum_{j=1}^n (\pd_j T_f)^* P_j^*P_j\pd_j T_f\big)^{1/2}\|_{S_p}  + 
 $$
 $$
\big( \sum_{j=1}^n P_j\pd_j T_f(\pd_j T_f)^* P_j^*\big)^{1/2}\|_{S_p} \,.
 $$
 But $P_j^*P_j=P_j^2=I$, and in the second term $P_j$ and $\pd_j T_f$ commute (as there is identity matrix on the $j$-th place of $\pd_j T_f$).
 Therefore
 $$
\| \big(\sum_{j=1}^n (\pd_j T_f)^* P_j^*P_j\pd_j T_f\big)^{1/2}\|_{S_p}  + 
 \big(\sum_{j=1}^n P_j\pd_j T_f(\pd_j T_f)^* P_j^*)^{1/2}\|_{S_p}  = 
  $$
  $$
  2 \|\big(\sum_{j=1}^n (\pd_j T_f)^* \pd_j T_f\big)^{1/2}\|_{S_p}\,.
$$
Using \eqref{Tf} we conclude that for $p\in [2, \infty)$
$$
 \|\big(\sum_{j=1}^n (\pd_j T_f)^* \pd_j T_f\big)^{1/2}\|_{S_p}\le C_p d \|T_f\|_{S_p}\,.
 $$
 Both matrices in the left hand side and the right hand side are form commutative algebra $M_n$.  They are $T_f$ and $T_{|\nabla f|}$. It is left to notice that
 for any scalar function $f$ on $\Om_n$ we have $\|T_f\|_{S_p}= \|f\|_{L^p(\Om_n}$. This is just by using the basis of characteristic function of point sets $\{\eps\}$ on $\Om_n$ to compute the $S_p$ norm   of $T_f$. This basis consist of eigenfunctions of $T_f$ with eigenvalues $f(\eps)$. This is  easy, see in \cite{BELP}.
 
 We finally proved \eqref{ncBM} by non-commutative approach of Francoise Lust-Piquard.
 
 \section{Addendum 1: Fourier coefficients of conformal map $\vf$}
 \label{add}
 
 We consider the domain
 $$
 O_\al:=  -G_\al\cup G_\al,
 $$
 where 
 $G_\al=\{w: w=e^{-z},  |\arg \,  z|\le \frac{ \pi\al}{2}\}$. It is not very difficult to write down the boundary of this domain (see Section \ref{add2} below, where we partially do this).
 Then one can notice that it consists of two real analytic curve $\Gamma_+, \Gamma_-$,  symmetric with respect to $\bR$ and forming
 angle $\pi\al$ at $-1, 1$. 
 
 Hence, the conformal map $\vf^{-1}: O_\al \to \bD$ can be extended to a slightly wider domain bounded by  real analytic curves $\gamma_+, \gamma_-$, such that $\gamma_+$ lies a bit higher than $\Gamma_+$ and meets $\Gamma_+$ at $\pm 1$, and forms angle $\tau \pi$ with $\Gamma_+$ at points $\pm 1$, where $\tau$ is a small strictly positive number. Symmetrically for $\gamma_-, \Gamma_-$.

 Then conformal  map $\vf$ is extended to domain $ \cR$ bounded by two symmetric real analytic curves, intersecting $\bT$ only  at $\pm 1$ and making angle $(\al+\tau)\pi$ with $\bT$ at those points.
 
 Then 
 $$
 c_{m}=
 \int_\bT \frac1{z^{m+1}} \vf (z) dz= -\frac1{m}\int_{\pd\cR} \vf (z) d \frac1{z^m}=\frac1{m}\int_{\pd\cR} \frac1{z^m} \vf '(z) dz
 $$
 
 Now we us that on $\pd\cR$, $\pm 1$ we have $|\frac1z| \le \frac1{1+a(\tau) |y|}$ for $z= x+iy$. We get
 $$
 |c_m|\lesssim \int_0^2 \frac1{(1+a y)^m } \frac1{y^{1-\al}} dy\lesssim  \int_0^2 e^{-a_1 m \, y}\,dy^\al  = \int_0^{2^\al} e^{-b (m^\al t )^{1/\al}}\, dt\,.
 $$
 The last integral is $\le \frac1{m^\al}\int_0^\infty e^{-b s^{\frac1{\al}}} \, ds \lesssim \frac1{m^\al}$.
 
 \section{Addendum 2: boundary of $O_\al$ and getting rid of $\eps$ in the proof of Theorem 6 of \cite{EI1}}
 \label{add2}
 
 We consider two domains
 $$
 \Om(r):=\{z\in \bC: \max\{ |z-i\sqrt{r^2-1}|, |z+i\sqrt{r^2-1}|\}<r\},
 $$
 and
 $$
 O_\al:=  -G_\al\cup G_\al,
 $$
 where 
 $G_\al=\{w: w=e^{-z},  |\arg \,  z|\le \frac{\pi\al}{2}\}$.  
 We would like to compare those two domains for 
 \begin{equation}
 \label{choice}
 \pi\al=2\arcsin\frac1r\,.
 \end{equation}
  The choice of $r$ is dictated by the fact that for this choice the angle 
 that the boundaries have at point $1$ are the same (and symmetrically at $-1$).
 
 It is not very difficult to write down the boundary of $O_\al$, we will do this now for its parts near points $\pm 1$.

 \bigskip
 
Define $a$ as follows $\tan \frac{\pi\al}{2} = \frac{\pi}{a}$.
 Let us consider $G_\al\cap \{\Re z\in [0, \frac{a}{2}]\}$. Consider  $ G_\al(a/2):=e^{-G_\al\cap\{ \Re z\in [0, \frac{a}{2}]\}}=\{w=u+iv = e^{-z}, \,z\in G_\al\cap \Re z\in [0, \frac{a}{2}]\}$.  It consists of arcs $S_t$ of the circles centered at point $(0,0)$ of radii $e^{-t}, 0\le t \le \frac{a}2$, and  each  arc is symmetric  (w.r. to $\bR$), and has angle $2 \arctan\frac{\pi}{a}t$. In particular, $S_{a/2}$ is a half-circle that intercepts $v$-axis at points $\pm e^{-a/2}$. The boundary of the domain $G_\al(a/2)$ consists of $S_{a/2}$ and of two real analytic symmetric (w.r. to $\bR$) arcs,  one of them $ \Gamma(a/2)$ (the one in $\bC_+$)  being given by parametric equation:
 $$
 \Gamma(a/2):\quad u= e^{-t}\cos\frac{\pi}{a}t,\quad v= e^{-t}\sin\frac{\pi}{a}t, \quad 0\le t\le a/2\,.
 $$
 
 We also have an interesting circle of radius $r:=\sqrt{1+\frac{a^2}{\pi^2}}=\frac1{\sin(\frac{\pi\al}{2})}$.
 with center at $-i\sqrt{r^2-1}=-i\frac{a}{\pi}=-i\,\text{cotan} (\frac{\pi\al}{2})$ .
 
 \bigskip
 
 Let us check that $\Gamma(a/2)$ lies below the circle, in other words that
 $$
 \Big( e^{-t}\cos\frac{\pi}{a}t\Big)^2 + \Big(e^{-t}\sin\frac{\pi}{a}t+ \frac{a}{\pi}\Big)^2 < 1+\frac{a^2}{\pi^2}, \, \text{for small}\,\, t>0\,.
 $$
 This is the same as
 $$
 e^{-2t} + 2 e^{-t} \frac{a}{\pi} \sin \frac{\pi}{a} t <1\,.
 $$
 We write
 $$
 1-2t +\frac{4t^2}{2}-\frac{8t^3}{6}+\dots +2 (1-t +\frac{t^2}{2}-\frac{t^3}{6}+...) ( t- ct^3+\dots)=
 $$
 $$
 1 +t^3-\frac43 t^3 - 2ct^3 +\dots<1,
 $$
 if $t$ is small as $c$ is positive.  So the lens domain $\Om(r)$ of \cite{EI1} seems to  contain $O_\al$, at least it is not contained in it as $\Gamma(a/2)$ lies inside $\Om(r)$.
 
 That represents a small problem for \cite{EI1} because inclusion (103) there is not valid if one chooses $r$ according to our preferred choice \eqref{choice}. In its turn this is reflected
 in the formulas for conformal mapping one uses around (103). But the formula for conformal mapping of the unit disc onto $\Om(r)$ is straightforward.
 
 \bigskip
 
 But if one chooses $r$ not according to \eqref{choice} but smaller, than the angle of the lens domain at $\pm 1$ is smaller than $\pi\al$ and inclusion (103)  holds.
 Thus Theorem 6 of \cite{EI1} reproves the heat smoothing result of \cite{MN} with $A(p, X) > \frac1\al$, where $\al$ is the angle from Pisier's Theorem \ref{pisier}. But one can notice that just a small improvement in \cite{EI1} reasoning gives the heat smoothing result with $A(p, X) =\frac1\al$.
 
 \bigskip
 
 Let us indicate this small change that should be implemented to get $A(p, X) =\frac1\al$ in Theorem 6 of \cite{EI1}.
 
 As, in the contrast to (103) of \cite{EI1}, we have $\Om_\al\subset \Om(r)$ with $r$ as in \eqref{choice}, then one need the estimates of conformal mapping of the disk onto $O_\al$ (the smaller of two domains).
 Of course the angle that boundary of $O_\al$ form at point $1$ (and $-1$) is just $\pi\al$ (in notations of \cite{EI1} it is $\theta$). This angle is the same for $\Om(r)$. But this observation is not enough to conclude the same asymptotic for conformal maps on these two domains.
 
 However, this is a not a real problem. It is easy to see that asymptotic is in fact the same. To see that one transforms $\Om_\al$ and $\Om(r)$ to strips by logarithmic map and then one uses Warschawski's estimate from \cite{W}. It shows that asymptotic is the same because one can easily compute that $\int^\infty \Theta'(u)^2/ \Theta(u)\,du$ converges, see \cite{W} for the explanation what is $\Theta(u)$ for strips. 
 
 \bigskip
 
 The heat smoothing conjecture of \cite{MN} claims that $A(p, X)=1$ for $K$-convex $X$, but it is still a conjecture.
 The important time is $t_0=\frac1{d^\alpha}$. The estimate of Theorem \ref{KXal}, or slightly strengthened estimate of Theorem 6 of \cite{EI1} or Theorem 5.1 of \cite{MN}, all those estimates show that if $X$ is $K$ convex, then  for $X$-valued $f$ in the $d$-tail space 
 $$
 \|e^{-t_0 \Delta}f \|_{L^p(X)} \le C \|f\|_{L^p(X)}\,.
 $$
 This does not give us any interesting information. What the heat smoothing conjecture basically says is the following, let $\alpha$ be the angle from Pisier's Theorem \ref{pisier}, then
  $$
 t_0=\frac1{d^\alpha} \Rightarrow \|e^{-t_0 \Delta}f \|_{L^p(X)} \le \eps(d) \|f\|_{L^p(X)}, \quad \eps(d)\to 0, \, d\to \infty\,.
 $$
 This is still open.

\end{document}